\theoremstyle{plain}
\newtheorem{theorem}{Theorem}[section]
\newtheorem{corollary}[theorem]{Corollary}
\newtheorem{lemma}[theorem]{Lemma}
\newtheorem{conjecture}[theorem]{Conjecture}
\theoremstyle{definition}
\theoremstyle{remark}
\numberwithin{equation}{section}
\numberwithin{figure}{section}
\numberwithin{table}{section}
\newcommand{\M}{\operatorname{M}}
\begin{document}
\setlength{\baselineskip}{16truept}
\title{Proof of Blum's Conjecture on Hexagonal Dungeons}
\maketitle

\begin{center}
MIHAI CIUCU\footnote{Department of Mathematics, Indiana University, Bloomington IN 47405,\\ email: mciucu@indiana.edu}
\and TRI LAI\footnote{Corresponding author -- Department of Mathematics, Indiana University, Bloomington IN 47405, email: tmlai@indiana.edu, tel: 812-855-2263}
\end{center}


\begin{abstract}
Matt Blum conjectured that the number of tilings of the  Hexagonal Dungeon of sides $a,\ 2a,\ b,\  a,\ 2a,\ b$ (where $b\geq 2a$) is $13^{2a^2}14^{\lfloor\frac{a^2}{2}\rfloor}$ (J. Propp, \textit{New Perspectives in Geometric Combinatorics}, Cambridge University Press, 1999). In this paper we present a proof for this conjecture using Kuo's Graphical Condensation Theorem (E. Kuo, \textit{Applications of Graphical Condensation for Enumerating Matchings and Tilings}, Theoretical Computer Science, 2004).
\end{abstract}

\section{Introduction}

In 1999 Propp published an article \cite{Propp} tracking the progress on a list of 20 open problems in the field of exact enumeration of perfect matchings, which he presented in a lecture in 1996, as part of the special program on algebraic combinatorics organized at MSRI during the academic year 1996--1997. The article also presented a list of 12 new open problems.

These 32 problems can be grouped into three broad categories: conjectures stating an explicit formula for the number of perfect matchings of the specific family of graphs they pertain to, problems for which the number of perfect matchings does not seem to be given by a simple formula, but presents some patterns that are required to be proved, and problems concerned with various aspects of the Kasteleyn matrices of the involved graphs, and not directly with their number of perfect matchings.

In some sense, the most compelling ones to prove are the ones in the first category. The only one from the list of 12 new problems which falls into this category is Matt Blum's conjecture on the number of tilings of the so-called hexagonal dungeon regions\footnote{We note that there is one problem among the original 20  (Problem 16 in \cite{Propp}) which stands out in a similar manner. This was solved and generalized recently by one of the authors of the current paper (T.L.); see \cite{hybrid}.}. Proving this conjecture, still open fourteen years after its publication, is the main result of the current paper.




%

Consider the lattice obtained from the triangular lattice by drawing in all the altitudes in all the unit triangles (i.e. the plane lattice corresponding to the affine Coxeter group $G_2$). On this lattice, consider a hexagonal contour of the type illustrated in Figure \ref{hexagon}. If the side-lengths of the hexagonal contour, in units equal to the side-length of the unit triangles, are $a$, $2a$, $b$, $a$, $2a$, $b$ (in clockwise order, starting from the western edge), then the lattice region determined by the indicated jagged contour is called the {\it hexagonal dungeon of sides $a$, $2a$, $b$, $a$, $2a$, $b$}, and is denoted $HD_{a,2a,b}$. This region was introduced by Matt Blum, who discovered a striking pattern in the number of its tilings\footnote{A tile is a union of two fundamental regions sharing an edge, and a tiling of a lattice region $R$ is a covering of $R$ by such tiles, with no gaps or overlaps.}, which led him to the following conjecture.


\begin{conjecture} [Matt Blum, Problem 25 in \cite{Propp}]\label{MBconjecture}
Assume that $a$ and $b$ are two positive integers so that $b\geq 2a$.  Then the number of tilings of the hexagonal dungeon
$HD_{a,2a,b}$
is $13^{2a^2}14^{\lfloor\frac{a^2}{2}\rfloor}$.
\end{conjecture}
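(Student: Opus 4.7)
The plan is to prove Blum's conjecture by an induction driven by Kuo's Graphical Condensation Theorem. A striking feature of the target formula $13^{2a^2}14^{\lfloor a^2/2\rfloor}$ is its independence from $b$, which suggests that the right induction should vary $b$ as freely as it varies $a$. However, the family $\{HD_{a,2a,b}\}_{b\geq 2a}$ is too rigid for Kuo's theorem: removing four boundary vertices almost always produces regions that no longer fit the hexagonal-dungeon mold. The first, and most delicate, step is therefore to embed the dungeons into a strictly larger family $\mathcal{R}$ of hexagonal-dungeon-like regions, obtained by perturbing the six side lengths (and possibly by creating small notches near chosen corners) so that the regions produced by condensation again lie in $\mathcal{R}$.

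With such a family in hand, I would formulate a generalized product formula in all the parameters that specializes back to $13^{2a^2}14^{\lfloor a^2/2\rfloor}$ in Blum's case. Applying Kuo's theorem at four carefully chosen boundary vertices (roughly, one near each of four of the six corners) yields an identity of the form
\[
\M(R)\cdot\M(R_0)=\M(R_1)\cdot\M(R_2)+\M(R_3)\cdot\M(R_4),
\]
where each region appearing on the right belongs to $\mathcal{R}$. The proof of the conjecture then reduces to two tasks: (i) verifying that the generalized formula satisfies this recurrence, which becomes a purely algebraic identity in the parameters, and (ii) checking the base cases, namely small or degenerate members of $\mathcal{R}$ (for instance regions in which a parameter collapses to zero and the region factors as a disjoint union, or tiny dungeons that can be enumerated by hand or by a direct bijection).

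The main obstacle, by a wide margin, will be guessing the correct generalized family $\mathcal{R}$ and the correct product formula for $\M(R)$ when $R\in\mathcal{R}$. Hexagonal dungeons have far less symmetry than the classical hexagon or Aztec-diamond regions, and the coexistence of the prime factors $13$ and $14$ in the answer hints at nontrivial local interactions between tiles that no simple perturbation of known formulas seems to capture. The typical route will be to compute tiling numbers for many perturbed regions by computer, look for a product pattern consistent with Blum's formula as a specialization, and then iterate between refining the family and refining the formula until a single Kuo recurrence closes on it. Once $\mathcal{R}$ and its formula are in hand, the inductive step and the base cases should follow along standard lines for Kuo-condensation proofs.
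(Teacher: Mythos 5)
The strategy you sketch---embed the dungeons in a larger family, guess a generalized product formula, close a Kuo recurrence on it, verify base cases---is indeed the paper's strategy in outline, but your proposal stops exactly where the proof has to begin. You never exhibit the generalized family, the formula, the choice of the four vertices, the verification that the formula satisfies the recurrence, or the base cases; you explicitly defer all of these as ``the main obstacle.'' As written this is a research plan rather than a proof: nothing in it certifies that a family $\mathcal{R}$ closed under the condensation moves exists, nor that any product formula specializing to $13^{2a^2}14^{\lfloor a^2/2\rfloor}$ satisfies the resulting identity, and those are precisely the nontrivial contents of the theorem.

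There is also a concrete structural idea you are missing, and it is the one that makes the generalization tractable. The paper does not perturb the dungeon itself (so it never has to carry the parameter $b$ through the induction): it first cuts the dual graph $G$ of $HD_{a,2a,b}$ along two zigzag lines into three pieces $G_1,G_2,G_3$, applies a graph-splitting lemma to get $\M(G)=\M(G_1)\M(G_2)\M(G_3)$, observes that the middle piece---the only one whose size depends on $b$---has a unique perfect matching, and that $G_1\cong G_3$, so $\M(HD_{a,2a,b})=\M(G_1)^2$ and the $b$-independence is explained \emph{before} any condensation. The conjecture thus reduces to $\M(G_1)=13^{a^2}14^{\lfloor a^2/4\rfloor}$, and it is this trapezoidal piece that gets generalized: two coupled three-parameter families $D_{a,b,c}$ and $E_{a,b,c}$, with explicit formulas $\phi,\psi$ equal to a prefactor in $\{1,2,3,5\}$ (depending on $3b+a-c$ modulo $6$) times $13^{(b-a)(b-c)+\lfloor (a-c)^2/3\rfloor}\,14^{\lfloor (a-b+c)^2/4\rfloor}$. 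Two families are genuinely needed, since in the degenerate cases the Kuo recurrence for a $D$-region involves an $E$-region and vice versa, and reflection arguments interchanging $D$ and $E$ handle the remaining parameter ranges, with the finitely many base cases checked by computer. Your single-family, notch-perturbed $\mathcal{R}$ with $b$ still among the parameters would have to rediscover all of this, and it asks the induction to prove the $b$-independence, which in the paper's argument is not an inductive fact at all but a consequence of the splitting step.
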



The main result of the current paper is a proof of this conjecture. Our proof is based on Kuo's powerful graphical condensation method \cite{Kuo}.
In order for graphical condensation to work, we need to extend the original conjecture to a more general family of regions. These more general regions, as well as the corresponding extension of Matt Blum's conjecture, are presented in Section 3. Section 4 shows how to obtain recurrences for the number of tilings of these regions, using the graphical condensation method. In Section 5, these recurrences are shown to be satisfied by the explicit formulas presented in Section 3. The proof of our extension of Blum's conjecture is presented in Section 6.

\begin{figure}\centering
\includegraphics[width=12cm]{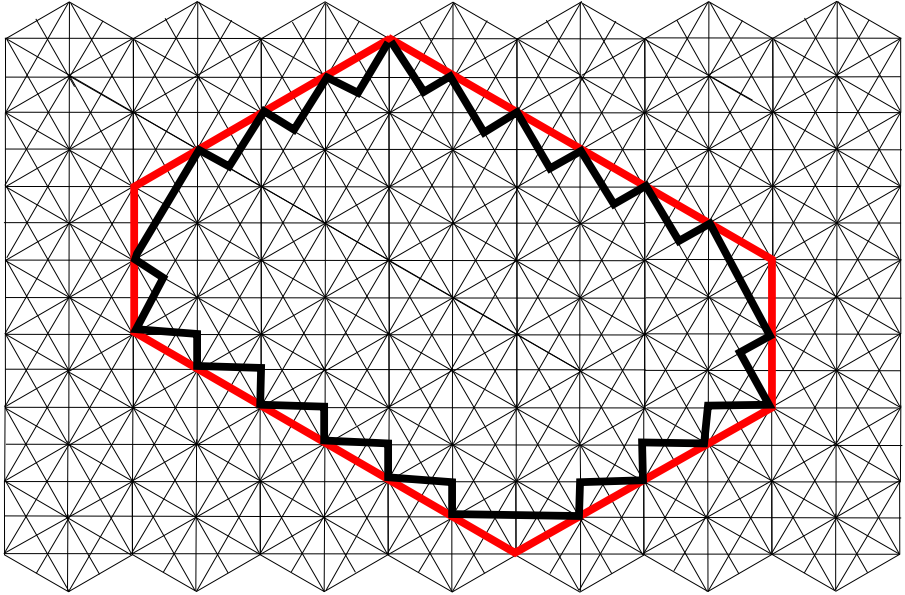}
\caption{The hexagonal dungeon of sides $2,$ $4,$ $6,$ $2,$ $4,$ $6$ (in cyclic order, starting from the western side).}
\label{hexagon}
\end{figure}

\section{Preliminaries}

A \textit{perfect matching} of a graph $G$ is a collection of edges with the property that each vertex of $G$ is incident to precisely one edge in the collection. Denote by $\M(G)$ the number of perfect matchings of a graph $G$. A \textit{forced edge} of a graph $G$ is an edge contained in every perfect matching of $G$. Therefore, removing forced edges does not change the number of perfect matchings of a graph.

Given a lattice in the plane, a (lattice) \textit{region} is a finite connected union of fundamental regions of that lattice. A \textit{tile} is the union of any two fundamental regions sharing an edge. A \textit{tiling} of the region $R$ is a covering of $R$ by tiles with no gaps or overlaps. The tilings of a region $R$ can be naturally identified with the perfect matchings of its dual graph (i.e., the graph whose vertices are the fundamental regions of $R$, and whose edges connect two vertices precisely when they correspond to fundamental regions that share an edge). In particular, the number of tilings of the region $R$ and the number of perfect matchings of its dual graph are equal. In view of this bijection, we denote the number of tilings of the lattice region $R$ by $\M(R)$.

An \textit{induced subgraph} of a graph $G$ is a graph whose vertex set is a subset $U$ of the vertex set of $G$, and whose edge set consists of all edges of $G$ with endpoints in $U$.  

\begin{lemma}[Graph Splitting Lemma]\label{graphsplitting}
Let $G$ be a bipartite graph, and let $V_1$ and $V_2$ be the two vertex classes.

Assume that an induced subgraph $H$ of $G$ satisfies following two conditions:
\begin{enumerate}
\item[(i)] \text{\rm{(Separating Condition)}} There are no edges of $G$ connecting a vertex in \\$V(H)\cap V_1$ and a vertex in $V(G-H)$.

\item[(ii)] \text{\rm{(Balancing Condition)}} $|V(H)\cap V_1|=|V(H)\cap V_2|$.
\end{enumerate}
Then
\begin{equation}\label{GS}
\M(G)=\M(H)\, \M(G-H).
\end{equation}
\end{lemma}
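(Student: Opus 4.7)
The plan is to establish a bijection between perfect matchings of $G$ and pairs consisting of a perfect matching of $H$ and a perfect matching of $G-H$. Since $V(H)$ and $V(G-H)$ partition $V(G)$, every edge of $G$ either lies inside $H$, lies inside $G-H$, or crosses between them. The heart of the argument will be to show that under the two hypotheses, no edge of any perfect matching of $G$ can be a crossing edge. Once this is established, each $\mu \in \M(G)$ splits cleanly as $\mu = \mu_H \sqcup \mu_{G-H}$, and conversely the disjoint union of any perfect matching of $H$ with any perfect matching of $G-H$ covers every vertex of $G$ exactly once.

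To carry out the key step, I would first observe that by the separating condition every vertex in $V(H)\cap V_1$ has all of its neighbors inside $V(H)\cap V_2$. Hence in any perfect matching $\mu$, every vertex of $V(H)\cap V_1$ is paired with a vertex of $V(H)\cap V_2$, accounting for $|V(H)\cap V_1|$ distinct vertices of $V(H)\cap V_2$. By the balancing condition $|V(H)\cap V_1|=|V(H)\cap V_2|$, so this exhausts $V(H)\cap V_2$. Consequently, no vertex of $V(H)\cap V_2$ can be matched across to $V(G-H)\cap V_1$. Together with the separating condition, this rules out crossing edges of both possible bipartite types, so $\mu$ decomposes as a disjoint union of a perfect matching of $H$ and one of $G-H$.

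For the reverse direction, given $\mu_1\in\M(H)$ and $\mu_2\in\M(G-H)$, I would simply check that $\mu_1\cup\mu_2$ is a perfect matching of $G$: each vertex of $V(G)$ lies in exactly one of $V(H)$ or $V(G-H)$ and is covered exactly once by the corresponding matching, and all edges involved are edges of $G$ since $H$ and $G-H$ are induced subgraphs of $G$. This map is clearly inverse to the restriction map above, giving a bijection $\M(G)\leftrightarrow \M(H)\times \M(G-H)$, which yields the multiplicative identity \eqref{GS}.

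I do not anticipate a real obstacle here; the only subtlety is remembering that the separating condition is one-sided (it forbids edges from $V(H)\cap V_1$ to $V(G-H)$ but says nothing directly about $V(H)\cap V_2$), so one must invoke the balancing condition to rule out the remaining type of crossing edge via a counting argument rather than by hypothesis alone.
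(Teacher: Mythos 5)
Your proof is correct and follows essentially the same route as the paper's: both arguments show that the separating condition forces every vertex of $V(H)\cap V_1$ to be matched inside $H$, and then use the balancing condition to count off $V(H)\cap V_2$ and conclude that no perfect matching of $G$ contains a crossing edge, after which the product formula is immediate. The only cosmetic difference is that the paper phrases this as a contradiction (via its sets $S_1,S_2,S_3$) while you argue directly and also spell out the converse bijection, which the paper leaves implicit.
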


\begin{proof}
Color all vertices of $V_1$ white, and all vertices of $V_2$ black. We partition $\mu$ into three disjoint submatchings: $\mu_1$ consists of edges in $E(H)$, $\mu_2$ consists of edges in $E(G-H)$, and $\mu_3$ consists of edges connecting a vertex in $H$ and a vertex in $G-H$. To prove the lemma, it suffices to show that $\mu_3=\emptyset$.

Suppose otherwise that $\mu_3\not=\emptyset$. One can partition the vertex $V(G)$ of $G$ into three disjoint sets $S_1,S_2,S_3$, where $S_i$ is the set of vertices incident edges in $\mu_i$. Since $\mu_3\not= \emptyset$, we have $S_3$ contains at least one vertex of $H$.

Since each edge in $\mu_1$ connects a black vertex in $H$ and a white vertex of $H$, and each edge in $\mu_2$ is not incident to any vertex of $H$; the balancing condition implies that the numbers of black and white vertices of $H$ in $S_3$ are equal. However, by separating condition, $S_3$ does not contain any white vertices of $H$. It implies that $S_3$ does not contain any vertices of $H$, a contradiction the fact in the previous paragraph.
\end{proof}

As we mentioned in the introduction, our proof of Blum's conjecture is based on Kuo's graphical condensation method \cite{Kuo}. For completeness, we include here the variant that we will need.

\begin{theorem} [Kuo \cite{Kuo}] \label{kuothm} Let $G$ be a planar bipartite graph, and let $V_1$ and $V_2$ be the two vertex classes. Assume that $|V_1|=|V_2|$. Let $x,y,z$ and $t$ be four vertices appear in a cyclic order on a face of $G$. Assume in addition that $x,z\in V_1$ and $y,t\in V_2$. Then
\begin{equation}\begin{split}
\M(G)\M(G-\{x,y,z,t\})=\M(G-\{x,y\})\M(G-\{z,t\})\\+\M(G-\{t,x\})\M(G-\{y,z\}).
\end{split}
\end{equation}
\end{theorem}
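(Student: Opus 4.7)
My plan is to prove the identity by a bijection built from superposing pairs of perfect matchings and re-decomposing along alternating paths. Both sides count ordered pairs of matchings of specified subgraphs of $G$, so I work with the multiset union $\mu \cup \nu$ of two matchings. For a pair $(\mu,\nu) \in \M(G) \times \M(G-\{x,y,z,t\})$, every vertex outside $\{x,y,z,t\}$ receives degree $2$ in the union while $x,y,z,t$ each receive degree $1$, so the union decomposes into vertex-disjoint even cycles (with doubled edges counted as $2$-cycles) and exactly two simple paths whose endpoints partition $\{x,y,z,t\}$ into two pairs. The same analysis applied to pairs in $\M(G-\{x,y\}) \times \M(G-\{z,t\})$ and in $\M(G-\{t,x\}) \times \M(G-\{y,z\})$ produces unions of the same shape; only the identity of the matching contributing the degree-$1$ edge at each distinguished vertex changes.

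Next I would determine which pairings of $\{x,y,z,t\}$ can actually occur in each of the three collections. Along any path in such a superposition the edges strictly alternate between the two matchings, and at each distinguished vertex exactly one matching contributes the single incident edge; combined with the bipartite parity rule that a path in $G$ has odd length iff its endpoints lie in different color classes, a direct case check shows that on the left side only $\{x,y\}\{z,t\}$ and $\{x,t\}\{y,z\}$ are consistent, while the two summands on the right respectively admit $\{x,y\}\{z,t\}$ or $\{x,z\}\{y,t\}$, and $\{x,t\}\{y,z\}$ or $\{x,z\}\{y,t\}$. At this step the planarity hypothesis enters: because $x,y,z,t$ lie in cyclic order on a face, two vertex-disjoint paths in $G$ pairing $x$ with $z$ and $y$ with $t$ would have to cross in the plane (a standard Jordan-curve argument), so the nested pairing $\{x,z\}\{y,t\}$ is excluded on both right-hand terms. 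Each side thus partitions into exactly two pairing classes, matched across sides.

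With pairings restricted, I would implement the bijection by taking symmetric differences along one of the two paths. For $(\mu,\nu)$ on the left with path-pairing $\{x,y\}\{z,t\}$, let $P$ be the $x$-to-$y$ path; the pair $(\mu \triangle E(P),\, \nu \triangle E(P))$ removes the $\mu$-edges at $x$ and $y$, installs $\nu$-type edges at the interior vertices of $P$, and leaves $z, t$ unaffected, so it lies in $\M(G-\{x,y\}) \times \M(G-\{z,t\})$. The same rule applied to the image inverts it, producing an involution and hence a bijection between the $\{x,y\}\{z,t\}$-pairings on the left and the full set $\M(G-\{x,y\}) \times \M(G-\{z,t\})$. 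The analogous swap along the $x$-to-$t$ path matches the $\{x,t\}\{y,z\}$-pairings on the left with $\M(G-\{t,x\}) \times \M(G-\{y,z\})$, and summing the two cases yields the stated identity.

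The main obstacle is the parity bookkeeping at the four distinguished vertices across the three pair collections---tracking which matching supplies the degree-$1$ edge at each of $x, y, z, t$ and matching this against the $V_1/V_2$ bipartition to narrow the possible pairings---together with the clean invocation of planarity to rule out the one remaining nested pairing on the right side. Once that case analysis is in place, the symmetric-difference construction and the verification that it is an involution are routine.
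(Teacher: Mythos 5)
Your argument is correct and complete in outline: the superposition of the two matchings, the parity analysis showing that the pairing $\{x,z\},\{y,t\}$ is excluded by bipartiteness on the left but only by planarity (the Jordan-curve/crossing argument for vertices in cyclic order on a face) on the right, and the symmetric-difference swap along the $x$--$y$ (resp.\ $x$--$t$) path giving an involution, is exactly the standard proof of this identity. Note, however, that the paper does not prove this statement at all --- it is quoted from Kuo's paper \cite{Kuo} and used as a black box --- so your proof is not an alternative to anything in the paper; rather, it essentially reconstructs Kuo's original superposition argument, and it does so accurately, including the one genuinely delicate point (that planarity is needed only to kill the nested pairing on the right-hand side).
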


\section{Extension of the conjecture}

\begin{figure}\centering
\includegraphics[width=12cm]{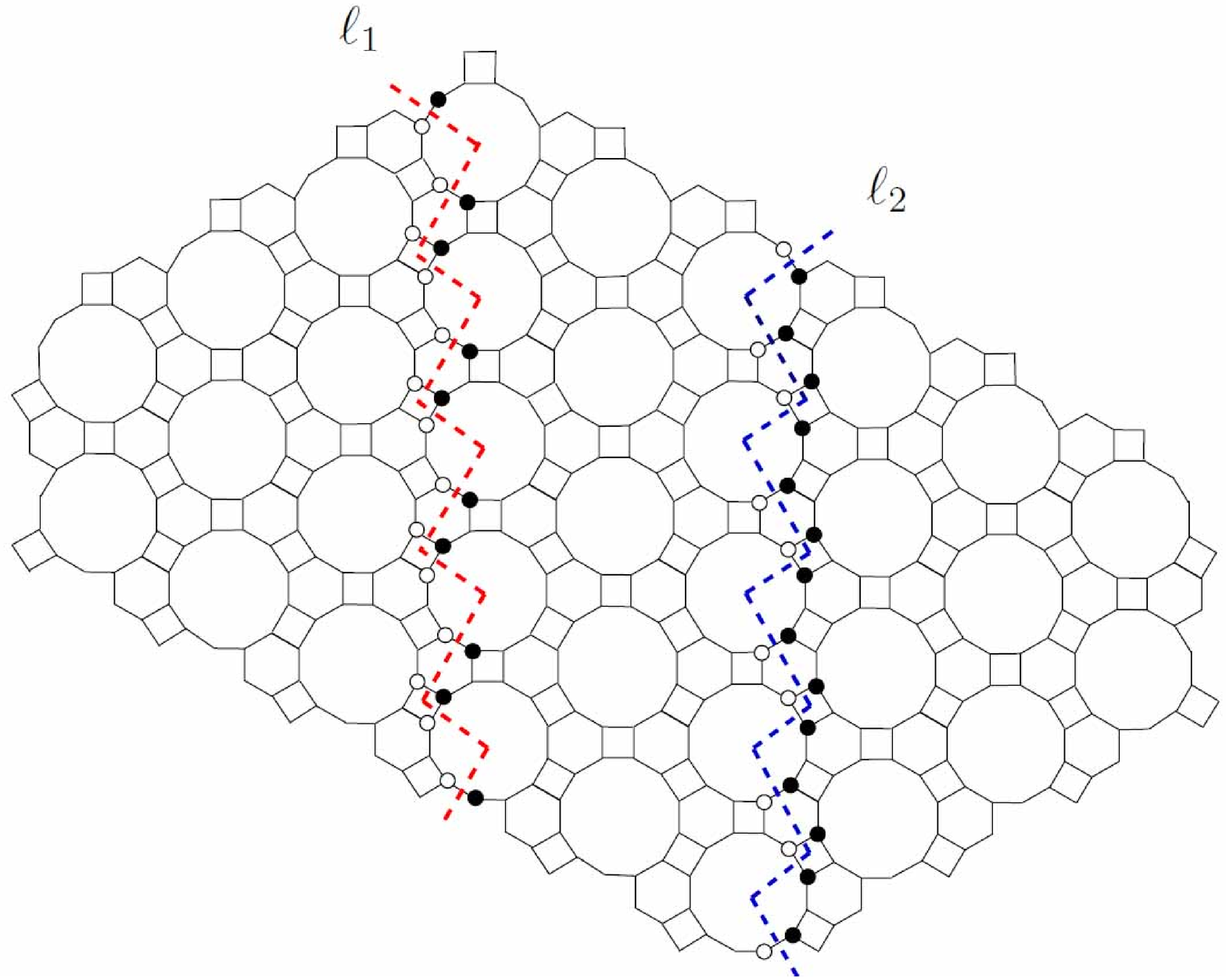}
\caption{The dual graph of the hexagonal dungeon of sides 2,4,6,2,4,6, and two zigzag cuts $\ell_1$ and $\ell_2$.}
\label{dual}
\end{figure}
\begin{figure}\centering
\includegraphics[width=12cm]{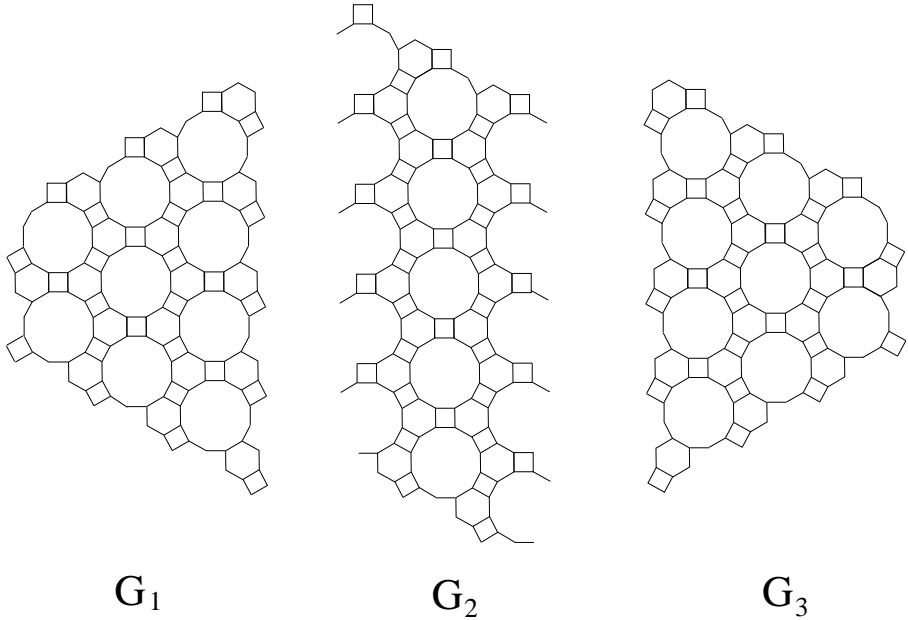}
\caption{Dividing the dual graph of the hexagonal dungeon of sides 2,4,6,2,4,6 into three parts.}
\label{split}
\end{figure}

Suppose $b\geq2a$, and let $G$ be the dual graph of the hexagonal dungeon of sides $a, 2a, b, a, 2a ,b$. Consider two zigzag cuts $\ell_1$ and $\ell_2$ on $G$ as indicated in Figure \ref{dual} (the illustrated case corresponds to $a=2$ and $b=6$).

The cuts divide the graph into three connected components. Denote them, from left to right, by $G_1$, $G_2$ and $G_3$ (see Figure \ref{split}).

$G$ is clearly a bipartite graph, and one readily checks that $G$ and its induced subgraph $G_1$ satisfy the assumptions of Lemma \ref{GS}. In a similar fashion, $G-G_1$ and its induced subgraph $G_3$ also satisfy the assumptions of Lemma \ref{GS}. We obtain therefore that
\begin{equation}\label{refinement1}
\M(G)=\M(G_1)\M(G-G_1)=\M(G_1) \M(G_2) \M(G_3).
\end{equation}

Since $G_1$ and $G_3$ are isomorphic, we have
\begin{equation}\label{refinement2}
\M(G_1)=\M(G_3).
\end{equation}

Moreover, by considering 
forced edges, one readily sees that the graph $G_2$ has a unique perfect matching, so $\M(G_2)=1$. Thus, from (\ref{refinement1}) and  (\ref{refinement2}) we obtain
\begin{equation}\label{refinement3}
\M(G)=\M(G_1)^2.
\end{equation}

 Therefore, in order to prove Blum's conjecture, it suffices to prove that
\begin{equation}  \label{refinement4}
\M(G_1)=13^{a^2}14^{\lfloor\frac{a^2}{4}\rfloor}.
\end{equation}
(Note that (\ref{refinement3}) and (\ref{refinement4}) show that the number of tilings of the hexagonal dungeon $H_{a,2a,b}$ is independent of $b$, for $b\geq 2a$.)

The graphical condensation identity (\ref{kuothm}) involves five new graphs besides the original graph $G$. If all these graphs belong to families of graphs whose number of perfect matchings are conjectured to be given by explicit formulas, then (\ref{kuothm}) provides a way of proving these conjectures by induction. The above family of trapezoidal graphs of type $G_1$ is not large enough for this set-up to hold. However, considering the two more general families of graphs described below (both of which extend the above family of trapezoidal graphs of type $G_1$) will lead to a generalization of Blum's conjecture for which this proof by induction approach will work.


\begin{figure}
\includegraphics[width=10cm]{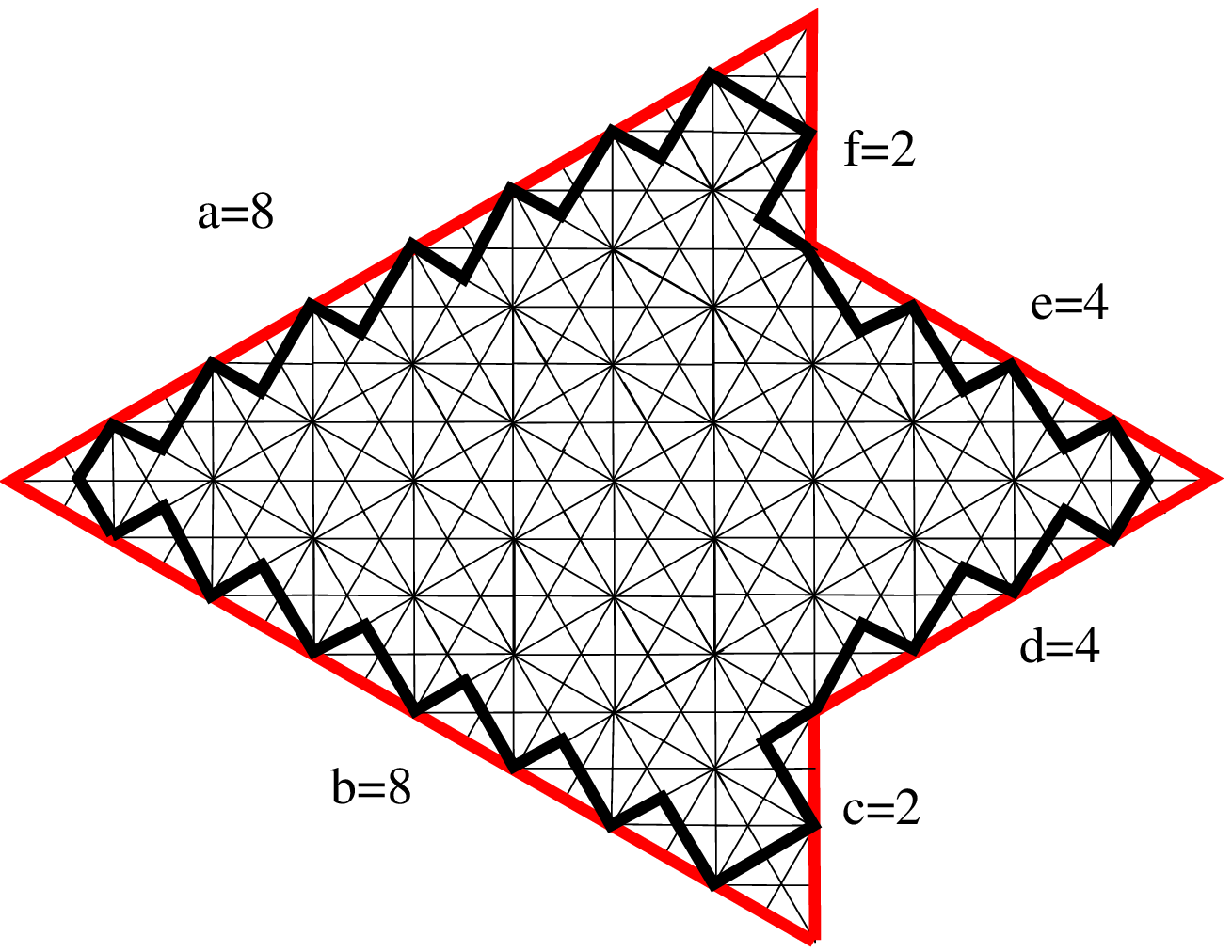}
\caption{The region $D_{8,8,2}$ (an instance of the case $a>c+d$).}
\label{D8,8,2}
\end{figure}

\begin{figure}
\includegraphics[width=10cm]{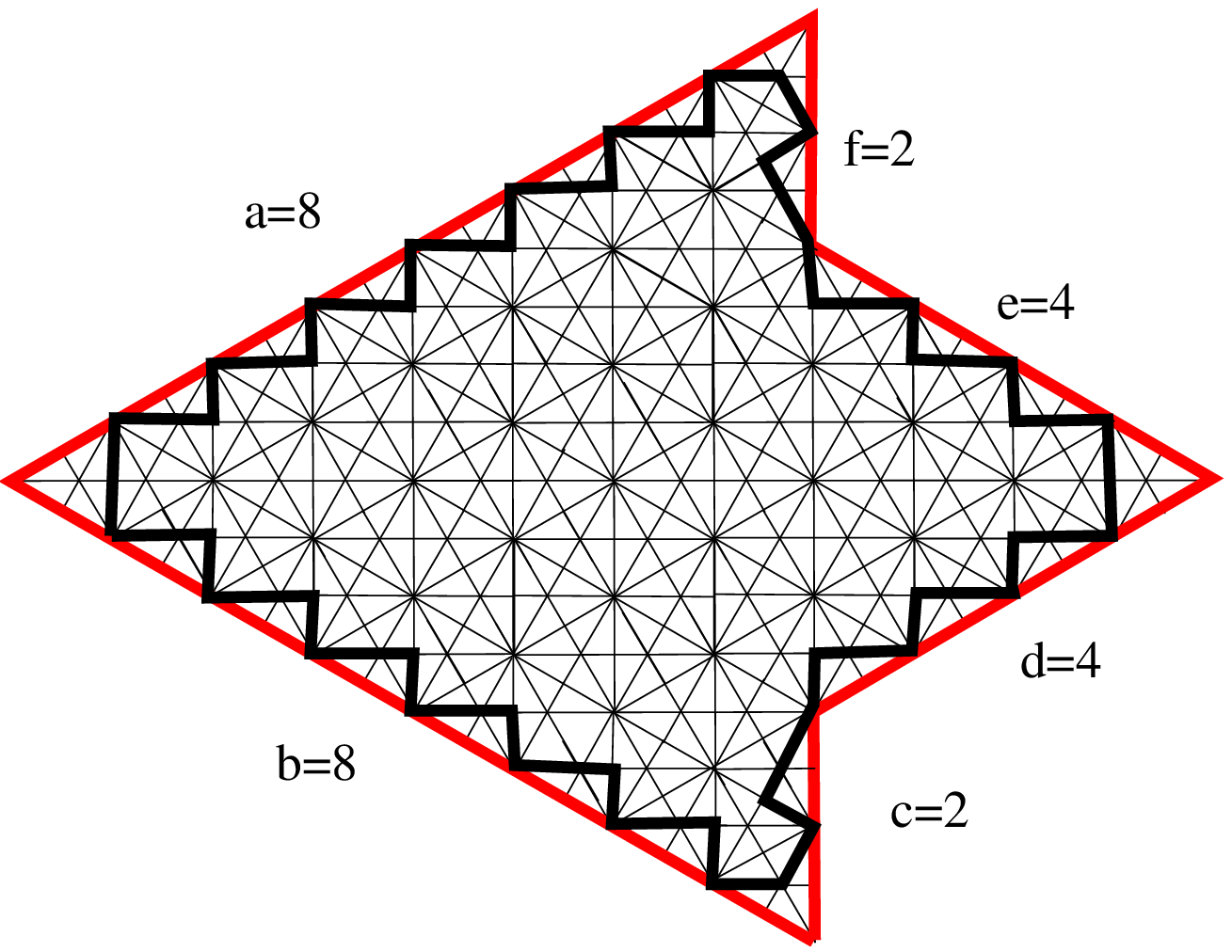}
\caption{The region $E_{8,8,2}$ (an instance of the case $a>c+d$).}
\label{E8,8,2}
\end{figure}

\begin{figure}
\includegraphics[width=8cm]{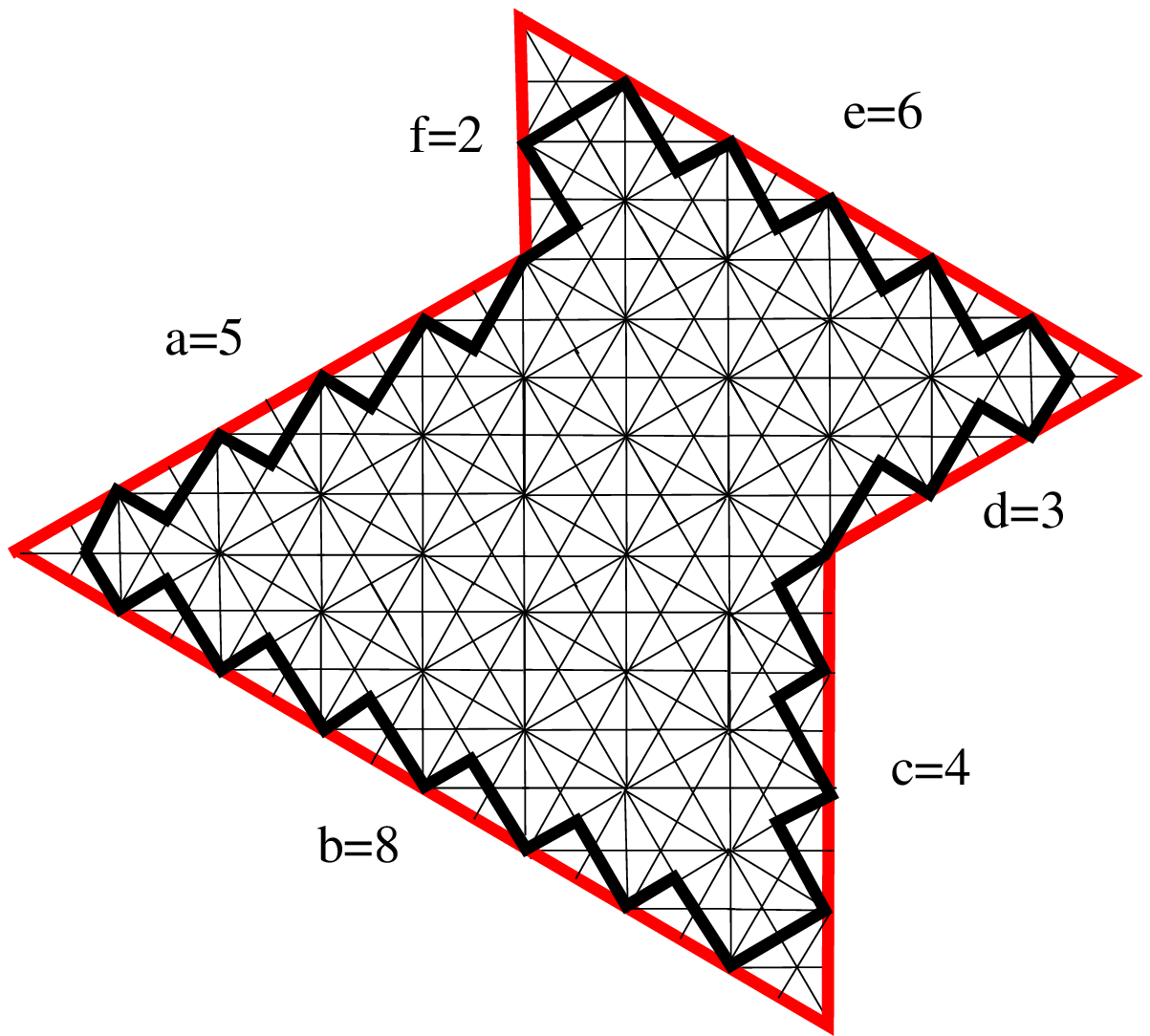}
\caption{The region $D_{5,8,4}$ (an instance of the case $a\leq c+d$).}
\label{D5,8,4}
\end{figure}

\begin{figure}
\includegraphics[width=8cm]{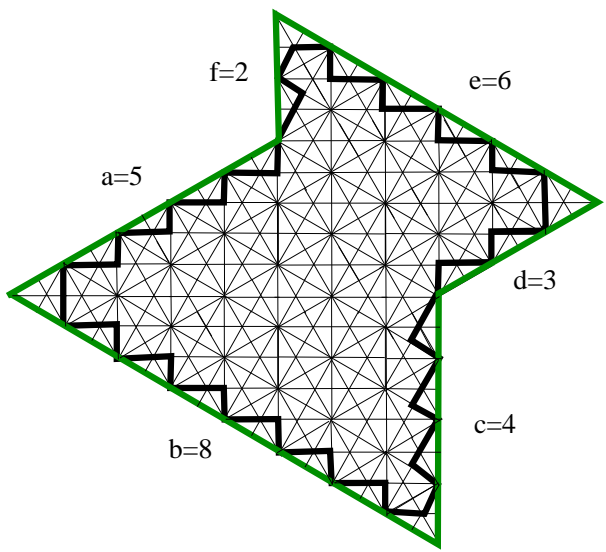}
\caption{The region $E_{5,8,4}$ (an instance of the case $a\leq c+d$).}
\label{E5,8,4}
\end{figure}


For nonnegative integers $a$, $b$ and $c$ (on which some additional constraints will be imposed as we describe our construction), we define a six-sided lattice contour $\mathcal{C}(a,b,c)$ as follows.

Starting from some lattice point, travel along lattice lines $a$ units southwest (a unit being the side-length of the unit triangles), $b$ units southeast, $c$ units north, $d$ units northeast, and $e$ units northwest. Choose $e$ so that the ending point is on the same vertical lattice line as the starting point. One readily sees that this amounts to
\begin{equation}\label{add1}
a+e=b+d.
\end{equation}
Then two different situations may occur: On the common vertical, the ending point is either strictly below the starting point (such an instance is illustrated in Figure \ref{D8,8,2}), or at least as high as the starting point (see Figure \ref{D5,8,4}).

In the first situation, close the contour by traveling $f$ units north. This leads to a closed contour precisely if
\begin{equation}\label{add2}
a+b=2c+d+e+2f.
\end{equation}
Since by (\ref{add1}) $e=b+d-a$,  (\ref{add2}) shows that in this situation we have
\begin{equation}\label{add3}
f=a-c-d.
\end{equation}

In the second situation, close the contour by traveling $f$ units south. This leads to a closed contour precisely if
\begin{equation}\label{add2p}
a+b+2f=2c+d+e.
\end{equation}
Using (\ref{add1}) and (\ref{add2p}), we see that in this situation
\begin{equation}\label{add3p}
f=c+d-a.
\end{equation}

Therefore, in both situations we have
\begin{equation}\label{add4}
e=b+d-a
\end{equation}
and
\begin{equation}\label{add5}
f=|c+d-a|.
\end{equation}
In addition, we will see shortly that we may assume without loss of generality that
\begin{equation}\label{add6}
d=2b-a-2c.
\end{equation}

Denote therefore, for nonnegative integers $a$, $b$ and $c$, by $\mathcal{C}(a,b,c)$ the six-sided lattice contour described above, where $d=2b-a-2c\geq0$, $e=b+d-a=3b-2a-2c\geq0$, and $f=|c+d-a|=|2a-2b+c|$.

Based on the contour $\mathcal{C}(a,b,c)$, we define two lattice regions $D_{a,b,c}$ and $E_{a,b,c}$  determined by the bold jagged contours as in Figures \ref{D8,8,2} and \ref{E8,8,2}, respectively, for the case $a>c+d$, and by Figures \ref{D5,8,4} and \ref{E5,8,4}, respectively, for the case $a\leq c+d$. We note that in order for the thus defined regions to be non-empty, we need to have $b\geq2$. These two families of regions provide the extension we need in order to prove our results by graphical condensation.

Note that the original graph $G_1$ of equation (\ref{refinement4}) is isomorphic to the dual graphs of both $D_{2a,3a,2a}$ and $E_{2a,3a,2a}$  (by (\ref{add5}) and (\ref{add6}), in the corresponding contour we have $d=0$ and $f=0$, and the contour becomes a trapezoid as in Figure \ref{trapezoid}).

\begin{figure}
\includegraphics[width=10cm]{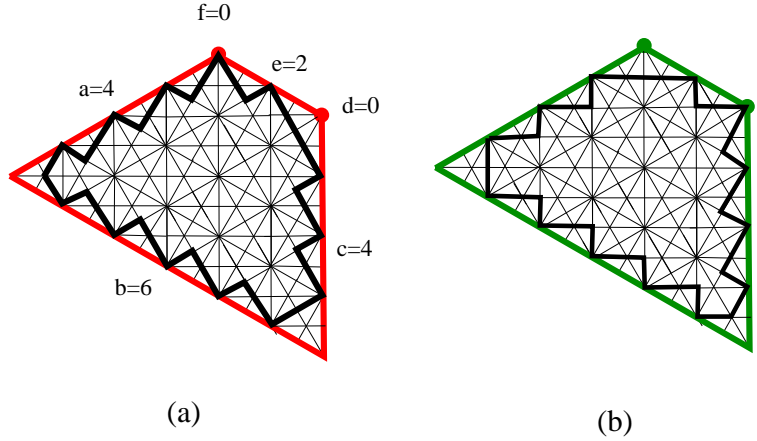}
\caption{The regions $D_{4,6,4}$ (a) and $E_{4,6,4}$ (b).}
\label{trapezoid}
\end{figure}

It will be useful to collect together all the constraints on the non-negative integers $a$, $b$ and $c$ for which the regions $D_{a,b,c}$ and $E_{a,b,c}$ are defined:
\begin{equation}\label{add7}
a\geq0
\end{equation}
\begin{equation}\label{add8}
b\geq2
\end{equation}
\begin{equation}\label{add9}
c\geq0
\end{equation}
\begin{equation}\label{add10}
2b-a-2c\geq0
\end{equation}
\begin{equation}\label{add11}
3b-2a-2c\geq0
\end{equation}
(the last two hold by (\ref{add6}) and (\ref{add4}), respectively).

Since we are interested in the number of tilings of these regions, and as the dual graphs of these regions are bipartite, there is an additional constraint on the values of their side-lengths coming from the condition that the number of vertices in the two bipartition classes are the same. It is easy to check that this amounts to
\begin{equation}\label{add12}
a+c+d=b+e+f
\end{equation}
when $a>c+d$, and
\begin{equation}\label{add12'}
a+c+d+f=b+e
\end{equation}
when $a\leq c+d$.
In both cases, equation (\ref{add6}) follows now by (\ref{add4}) and (\ref{add5}).

We now describe the expressions which give the number of tilings of the above defined regions  $D_{a,b,c}$ and $E_{a,b,c}$.

Let $a,b,c$ be three integers. Define two functions $\phi$ and $\psi$ by setting

\begin{equation}\label{fipsistart}
\phi(a,b,c):=h(a,b,c)13^{g(a,b,c)}14^{f(a,b,c)}
\end{equation}
and
\begin{equation}
\psi(a,b,c):=p(a,b,c) 13^{g(a,b,c)}14^{f(a,b,c)},
\end{equation}
 where
\begin{equation}
g(a,b,c)=(b-a)(b-c)+\left\lfloor\frac{(a-c)^2}{3}\right\rfloor,
\end{equation}
\begin{equation}
f(a,b,c)=  \left\lfloor\frac{(a-b+c)^2}{4}\right\rfloor,
\end{equation}
\begin{equation}
h(a,b,c)=
\begin{cases}
2 &\text{if $3b+a-c\equiv 4\pmod{6}$}\\
3 &\text{if $3b+a-c\equiv 1\pmod{6}$}\\
5 &\text{if $3b+a-c\equiv 5\pmod{6}$}\\
1 &\text{otherwise,}
\end{cases}
\end{equation}
 and
 \begin{equation}\label{fipsiend}
p(a,b,c)=
\begin{cases}
2 &\text{if $3b+a-c\equiv 2\pmod{6}$}\\
3 &\text{if $3b+a-c\equiv 5\pmod{6}$}\\
5 &\text{if $3b+a-c\equiv 1\pmod{6}$}\\
1 &\text{otherwise.}
\end{cases}
\end{equation}

\begin{theorem} \label{main}
Assume that $a$, $b$, and $c$ are three nonnegative integers satisfying $b\geq 2$, $2b-a-2c\geq0$ and $3b-2a-2c\geq0$. Then
\begin{equation}\label{formulas}
\M(D_{a,b,c})=\phi(a,b,c) \text{ and } \M(E_{a,b,c})=\psi(a,b,c)
\end{equation}
\end{theorem}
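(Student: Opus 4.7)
The plan is to prove the two identities in \eqref{formulas} simultaneously by strong induction on a suitable complexity measure (for instance $b$, with $a+c$ as a tie-breaker), the inductive step being driven by Kuo's Theorem~\ref{kuothm} applied to the dual graphs of $D_{a,b,c}$ and of $E_{a,b,c}$. By \eqref{refinement4} and \eqref{refinement3}, establishing Theorem~\ref{main} immediately implies Blum's conjecture via the specialization $(a,b,c)=(2a,3a,2a)$.

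The inductive step should produce a pair of recurrences of the schematic form
\begin{equation*}
\M(D_{a,b,c})\,\M(R_0) = \M(R_1)\,\M(R_2) + \M(R_3)\,\M(R_4),
\end{equation*}
and an analogous one starting from $E_{a,b,c}$, where each $R_i$ is (after reduction) a member of the families $\{D_{a',b',c'}\}\cup\{E_{a'',b'',c''}\}$ with strictly smaller complexity. To obtain them, I would pick four vertices $x,y,z,t$ lying cyclically on a single face of the dual graph (concretely, near a corner of the jagged contour) so that each of the five auxiliary graphs $G-\{x,y,z,t\}$, $G-\{x,y\}$, $G-\{y,z\}$, $G-\{z,t\}$, $G-\{t,x\}$ acquires a large block of forced edges. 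Removing those forced edges and applying the Graph Splitting Lemma~\ref{graphsplitting} to peel off the trivially tileable pieces should leave, in each of the five cases, a region isomorphic to the dual graph of a $D$- or $E$-region with shifted parameters. The choice of $(x,y,z,t)$ may well have to be made differently in the two geometric regimes $a>c+d$ and $a\leq c+d$, and it may be necessary to intertwine $D$ and $E$ within a single Kuo identity rather than treating them in isolation.

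Once the recurrences are in hand, one checks that the closed forms $\phi$ and $\psi$ satisfy them. Because the exponents $g(a,b,c)=(b-a)(b-c)+\lfloor(a-c)^2/3\rfloor$ and $f(a,b,c)=\lfloor(a-b+c)^2/4\rfloor$ are low-degree polynomial/floor expressions, the powers of $13$ and $14$ will factor out of each term in the recurrence, reducing the verification to a small linear identity among the prefactors $h$ and $p$; this is then checked in each residue class of $3b+a-c$ modulo $6$. For the base of the induction, the extremal admissible values (such as $b=2$, or the boundary cases $3b-2a-2c=0$ or $2b-a-2c=0$ where the contour degenerates) give regions thin enough that $\M(D_{a,b,c})$ and $\M(E_{a,b,c})$ can be pinned down directly by iterated forced-edge removal together with Lemma~\ref{graphsplitting}, and the resulting values can then be matched against $\phi$ and $\psi$ directly.

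The hard part will be the geometric step of selecting the condensation vertices $(x,y,z,t)$: one needs a single choice (or at most a short menu covering the two cases $a\gtrless c+d$) such that \emph{every} one of the five auxiliary graphs reduces, after forced-edge peeling, to another member of $\{D_{a',b',c'}\}\cup\{E_{a'',b'',c''}\}$, and moreover with precisely the parameter shifts needed to make the resulting recurrence match the recursion satisfied by $\phi$ and $\psi$. A secondary but nontrivial difficulty is the mod-$6$ bookkeeping: the shifts in $(a,b,c)$ across the six terms of each Kuo identity must produce compatible changes in the residue of $3b+a-c$, so that the values of $h$ and $p$ on the six terms combine into a valid identity in each of the six residue classes. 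If both of these obstacles are surmounted, the rest of the argument is a finite algebraic check plus a handful of base-case tiling computations.
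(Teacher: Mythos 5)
Your outline does follow the same broad strategy as the paper (Kuo condensation on the dual graphs of $D_{a,b,c}$ and $E_{a,b,c}$, an induction in which the closed forms $\phi,\psi$ are shown to satisfy the same recurrences via a mod-$6$ analysis of the prefactors $h,p$, plus finitely many base cases), and you correctly anticipate two delicate points: that the vertex choice must differ in the regimes $a>c+d$ versus $a\leq c+d$, and that $D$- and $E$-regions must sometimes be mixed inside a single Kuo identity (this happens in the paper exactly when $d=0$). But as it stands the proposal has genuine gaps beyond simply not exhibiting the condensation vertices. First, your proposed induction measure ($b$, with $a+c$ as tie-breaker) is incompatible with the recurrences that actually arise: the identity used when $d\geq 2$ reads $\M(D_{a,b,c})\M(D_{a-2,b-2,c})=\M(D_{a-1,b-1,c})^2+\M(D_{a,b,c+1})\M(D_{a-2,b-2,c-1})$, whose term $D_{a,b,c+1}$ has the \emph{same} $b$ and a \emph{larger} $a+c$, so your induction would not close. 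The paper instead inducts on the perimeter $\mathcal{P}(a,b,c)$ of the contour (equal to $4b-2c$ or $8b-4a-4c$ according to the case), under which every region appearing on the right is strictly smaller. Second, your base-case plan is wrong in substance: the boundary cases $2b-a-2c=0$ or $3b-2a-2c=0$ are not ``thin'' regions determined by forced edges --- the case $d=f=0$ is precisely the trapezoid $D_{2a,3a,2a}$, i.e.\ half of the hexagonal dungeon itself, with $13^{a^2}14^{\lfloor a^2/4\rfloor}$ tilings. The base cases the paper actually needs are $\mathcal{P}(a,b,c)\leq 14$, $b\leq 4$, or $c+d\leq 2$, some seventy-odd regions whose tiling counts are verified by software, not by forced-edge peeling.

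Third, condensation alone does not reach all admissible triples: each of the three recurrence lemmas requires lower bounds such as $a\geq 2$, $c\geq 2$, $b\geq 5$, or the inequality $a\geq c+d+1$ versus $a\leq c+d$, and when these fail (e.g.\ $a\leq 1$, or $e\leq d$, equivalently $b\leq a$) no choice of corner vertices produces usable forced-edge reductions. The paper handles these leftover cases by reflecting the region --- $D_{a,b,c}$ maps to $E_{f,e,d}$ or to $D_{b,a,f}$ --- and pairing the reflection with identities such as $\phi(a,b,c)=\psi(f,e,d)$ and $\phi(a,b,c)=\phi(b,a,f)$, which must themselves be verified from the definitions of $f,g,h,p$. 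Your proposal contains no analogue of this step, and without it (or some replacement) the induction has uncovered cases. So while the skeleton is right, the missing pieces are not routine: the correct well-ordering, realistic base cases, and the symmetry arguments covering the parameter ranges where condensation does not apply are all essential to making the argument go through.
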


A special case of this theorem gives a proof for Blum's conjecture on the number of tilings of hexagonal dungeons (Conjecture \ref{MBconjecture}).

\begin{corollary} \label{MB}
Assume that $a$ and $b$ are two positive integers so that $b\geq 2a$.  Then the number of tilings of the hexagonal dungeon
$HD_{a,2a,b}$
is $13^{2a^2}14^{\lfloor\frac{a^2}{2}\rfloor}$.
\end{corollary}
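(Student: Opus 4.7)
The plan is to deduce Corollary \ref{MB} as a direct numerical specialization of Theorem \ref{main}, using the reduction already carried out in Section 3. That section showed $\M(HD_{a,2a,b})=\M(G_1)^2$ (see equation (\ref{refinement3})), where $G_1$ is isomorphic to the dual graph of $D_{2a,3a,2a}$. Granting Theorem \ref{main}, we therefore have
\[
\M(HD_{a,2a,b}) \;=\; \phi(2a,3a,2a)^2,
\]
so the entire corollary reduces to verifying the identity $\phi(2a,3a,2a) = 13^{a^2}14^{\lfloor a^2/4\rfloor}$.

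To verify this I would simply evaluate the three pieces of $\phi$ at the triple $(2a,3a,2a)$. The exponent $g$ becomes $(3a-2a)(3a-2a)+\lfloor(2a-2a)^2/3\rfloor = a^2$; the exponent $f$ becomes $\lfloor(2a-3a+2a)^2/4\rfloor = \lfloor a^2/4\rfloor$; and the prefactor $h$ equals $1$, because the relevant residue $3b+a-c = 3\cdot 3a + 2a - 2a = 9a$ is always congruent to $0$ or $3\pmod 6$ and hence never lands in any of the three exceptional classes $1$, $4$, $5$. Thus $\phi(2a,3a,2a) = 13^{a^2}14^{\lfloor a^2/4\rfloor}$, and squaring, together with the elementary identity $2\lfloor a^2/4\rfloor = \lfloor a^2/2\rfloor$ (which holds because $a^2 \bmod 4 \in \{0,1\}$, so $a^2$ is either divisible by $4$ or odd), yields exactly Blum's formula $13^{2a^2}14^{\lfloor a^2/2\rfloor}$.

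All the genuine combinatorial work is concentrated in Theorem \ref{main}; deducing the corollary from it is mechanical. The only additional items to check are that the hypotheses of Theorem \ref{main} are satisfied at $(2a,3a,2a)$, namely $3a\geq 2$, $2(3a)-2a-2(2a) = 0 \geq 0$, and $3(3a)-2(2a)-2(2a) = a \geq 0$, all of which are immediate for positive integers $a$. Consequently there is no real obstacle at this stage: any difficulty lies entirely in proving Theorem \ref{main} itself.
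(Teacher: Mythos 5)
Your proposal is correct and follows exactly the paper's own route: reduce via $\M(HD_{a,2a,b})=\M(G_1)^2$ from Section 3, identify $G_1$ with the dual graph of $D_{2a,3a,2a}$, and check that $\phi(2a,3a,2a)=13^{a^2}14^{\lfloor a^2/4\rfloor}$, which is precisely equation (\ref{refinement4}); your explicit evaluation of $g$, $f$, $h$ and the parity check $2\lfloor a^2/4\rfloor=\lfloor a^2/2\rfloor$ simply spell out what the paper leaves as ``one readily checks.''
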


\begin{proof}
As shown at the beginning of this section, it suffices to prove equation (\ref{refinement4}). However, as we noted above, the graph $G_1$ is isomorphic to the dual graph of the region $D_{2a,3a,2a}$. One readily checks that formula (\ref{formulas}) specializes in this case to~(\ref{refinement4}).
\end{proof}

Another special case of Theorem \ref{main} gives the Aztec dungeon theorem (Theorem 3.10 in \cite{Ciucu}). Indeed, one readily sees that the region $E_{n+1,n+1,0}$ is precisely the Aztec dungeon of order $n$ (see Figure \ref{dungeon} for an illustration). Note that this constitutes a new proof of the Aztec dungeon theorem.

Viewed this way, Theorem \ref{main} is seen as a common generalization of the Aztec dungeon theorem and Blum's conjecture on the hexagonal dungeons.

\begin{figure}\centering
\includegraphics[width=10cm]{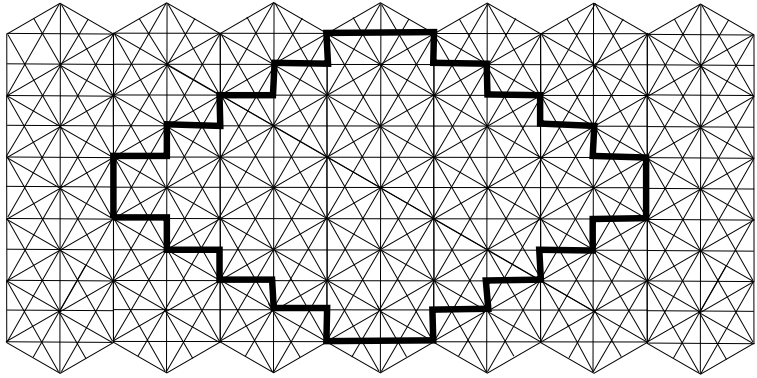}
\caption{The Aztec dungeon of order 5}
\label{dungeon}
\end{figure}



\section{ Recurrences for $\M(D_{a,b,c})$ and $\M(E_{a,b,c})$}

We use Kuo's graphical condensation method (as stated in Theorem \ref{kuothm}) to obtain five recurrences for the number of tilings of the regions $D_{a,b,c}$, and five similar recurrences for the number of tilings of the regions $E_{a,b,c}$. These recurrences are presented in the following three lemmas.

\begin{lemma}\label{con1}  Let $a$, $b$ and $c$ be nonnegative integers so that  $b\geq5$ and $c\geq 2$. Let $d=2b-a-2c$, and assume in addition that $a\geq c+d+1$. Then
\begin{equation} \label{eq1-1}
\begin{split}
\M(D_{a,b,c})\M(D_{a-3,b-3,c-2})=\M(D_{a-2,b-1,c})\M(D_{a-1,b-2,c-2})\\+\M(D_{a-1,b-1,c-1})\M(D_{a-2,b-2,c-1}),
\end{split}\end{equation}
 and
\begin{equation}\label{eq2-1}
 \begin{split}
\M(E_{a,b,c})\M(E_{a-3,b-3,c-2})=\M(E_{a-2,b-1,c})\M(E_{a-1,b-2,c-2})\\+\M(E_{a-1,b-1,c-1})\M(E_{a-2,b-2,c-1}).
\end{split}
\end{equation}
\end{lemma}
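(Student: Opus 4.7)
My plan is to derive both identities as instances of Kuo's condensation theorem (Theorem~\ref{kuothm}), applied to the dual graph $G$ of $D_{a,b,c}$ and, separately, to the dual graph of $E_{a,b,c}$. The strategy is to pick four boundary vertices $x,y,z,t$ lying in cyclic order on the outer face of $G$, with $x,z$ in one color class and $y,t$ in the other, in such a way that each of the five graphs $G-\{x,y,z,t\}$, $G-\{x,y\}$, $G-\{z,t\}$, $G-\{t,x\}$, $G-\{y,z\}$ reduces, after removal of forced edges and after an application of the Graph Splitting Lemma (Lemma~\ref{graphsplitting}), to the dual graph of one of the smaller regions that appears in the conjectured recurrence.

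Concretely, in the case $a\geq c+d+1$ the contour is the one pictured in Figure~\ref{D8,8,2}; since we are assuming $b\geq 5$ and $c\geq 2$, there is enough room to choose $x$ and $t$ as two adjacent boundary unit-triangles near one southern corner of the contour, and $z$ and $y$ as two adjacent boundary unit-triangles near the opposite corner, with the alternation of color dictated by the bipartition. The four vertices then lie on the outer face of $G$ in cyclic order, and Theorem~\ref{kuothm} applies verbatim to produce the two-term right-hand side of (\ref{eq1-1}).

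The crux of the argument, and what I expect to be the main obstacle, is the geometric bookkeeping: I need to verify that removing each of the subsets of $\{x,y,z,t\}$ forces a cascade of unit-triangle matchings along the zigzag sides, and that what remains, after peeling off these forced lozenges and applying the Graph Splitting Lemma to the resulting graph, is the dual graph of a $D$-region with parameters $(a{-}3,b{-}3,c{-}2)$, $(a{-}2,b{-}1,c)$, $(a{-}1,b{-}2,c{-}2)$, $(a{-}1,b{-}1,c{-}1)$ or $(a{-}2,b{-}2,c{-}1)$, respectively. This forcing is characteristic of hexagonal dungeon regions: chopping off a unit triangle near a zigzag corner pushes the boundary along by a controlled number of unit steps. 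The parameter shifts listed above are exactly those produced by eliminating one or two unit triangles at two different corners and then splitting off the resulting band of forced lozenges via Lemma~\ref{graphsplitting}. The hypotheses $b\geq 5$, $c\geq 2$ and $a\geq c+d+1$ are precisely what guarantees that all six $D$-regions appearing in (\ref{eq1-1}) satisfy the defining constraints (\ref{add7})--(\ref{add11}), so that the identity is meaningful.

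The proof of (\ref{eq2-1}) runs in parallel. Since $D_{a,b,c}$ and $E_{a,b,c}$ differ only by local modifications of the boundary (compare Figures~\ref{D8,8,2} and \ref{E8,8,2}), the same choice of four vertices at the analogous corners of the $E$-contour, followed by the same cascade of forcing arguments, produces the five smaller $E$-regions that appear on the right-hand side of (\ref{eq2-1}). Once the five geometric identifications are verified in each of the two cases, both recurrences follow immediately from Theorem~\ref{kuothm}.
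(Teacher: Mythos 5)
Your overall strategy coincides with the paper's: apply Kuo's condensation (Theorem \ref{kuothm}) to the dual graph of $D_{a,b,c}$ (and of $E_{a,b,c}$) with four vertices on the outer face, and identify each of the five vertex-deleted graphs, after removing forced edges, with the dual graph of one of the smaller regions in (\ref{eq1-1}) and (\ref{eq2-1}). But as written your argument has a genuine gap, and it is exactly the part you yourself flag as ``the crux'': the five identifications are never verified. They are the entire content of the lemma -- everything else (checking that the five smaller triples still satisfy (\ref{add7})--(\ref{add11}), and invoking Theorem \ref{kuothm}) is routine -- and asserting that ``chopping off a unit triangle near a zigzag corner pushes the boundary along by a controlled number of unit steps'' does not establish that the resulting regions are $D_{a-2,b-1,c}$, $D_{a-1,b-2,c-2}$, $D_{a-1,b-1,c-1}$, $D_{a-2,b-2,c-1}$ and $D_{a-3,b-3,c-2}$ rather than regions with some other parameter shifts. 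What you have is a proof plan, not a proof.

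Moreover, the specific placement you propose is likely not the one that works. You put $x,t$ adjacent near one corner and $y,z$ adjacent near the opposite corner (a $2{+}2$ placement at two corners). The placement that produces the shifts in (\ref{eq1-1}) uses three corners: $x$ and $t$ near the western corner (where the sides of lengths $a$ and $b$ meet), $y$ at the northern corner, and $z$ at the southern corner (see Figure \ref{dualD883}). The asymmetry of the four pairwise shifts -- $(a{-}2,b{-}1,c)$ leaves $c$ untouched while $(a{-}1,b{-}2,c{-}2)$ lowers it by $2$, and $(a{-}1,b{-}1,c{-}1)$ versus $(a{-}2,b{-}2,c{-}1)$ -- reflects the fact that the forced-edge cascades emanate from three distinct corners, with the western corner contributing twice. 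With both members of each pair sitting at a single corner, the cascades concentrate at only two corners and the resulting parameter shifts do not match those in the recurrence, so you would be proving a different identity (if any). Finally, a small point: the Graph Splitting Lemma \ref{graphsplitting} plays no role here; removing forced edges alone reduces each vertex-deleted graph to the dual of the smaller region, and the check that all intermediate triples remain admissible is what the hypotheses $b\geq 5$, $c\geq 2$, $a\geq c+d+1$ are for.
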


\begin{figure}
\includegraphics[width=10cm]{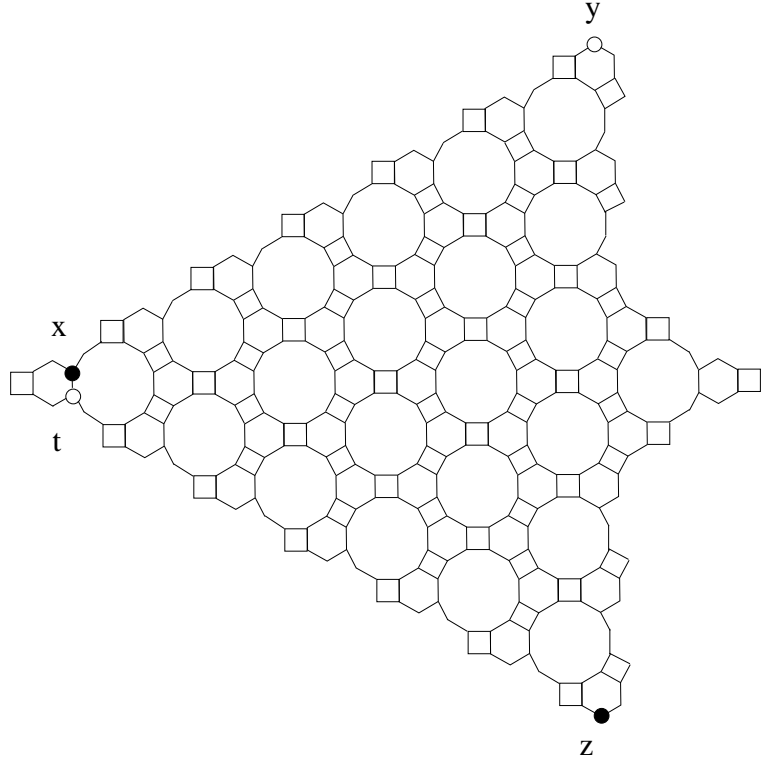}
\caption{The dual graph of the region $D_{8,8,3}$ and the four vertices $x,y,z,t$.}
\label{dualD883}
\end{figure}

\begin{figure}
\includegraphics[width=10cm]{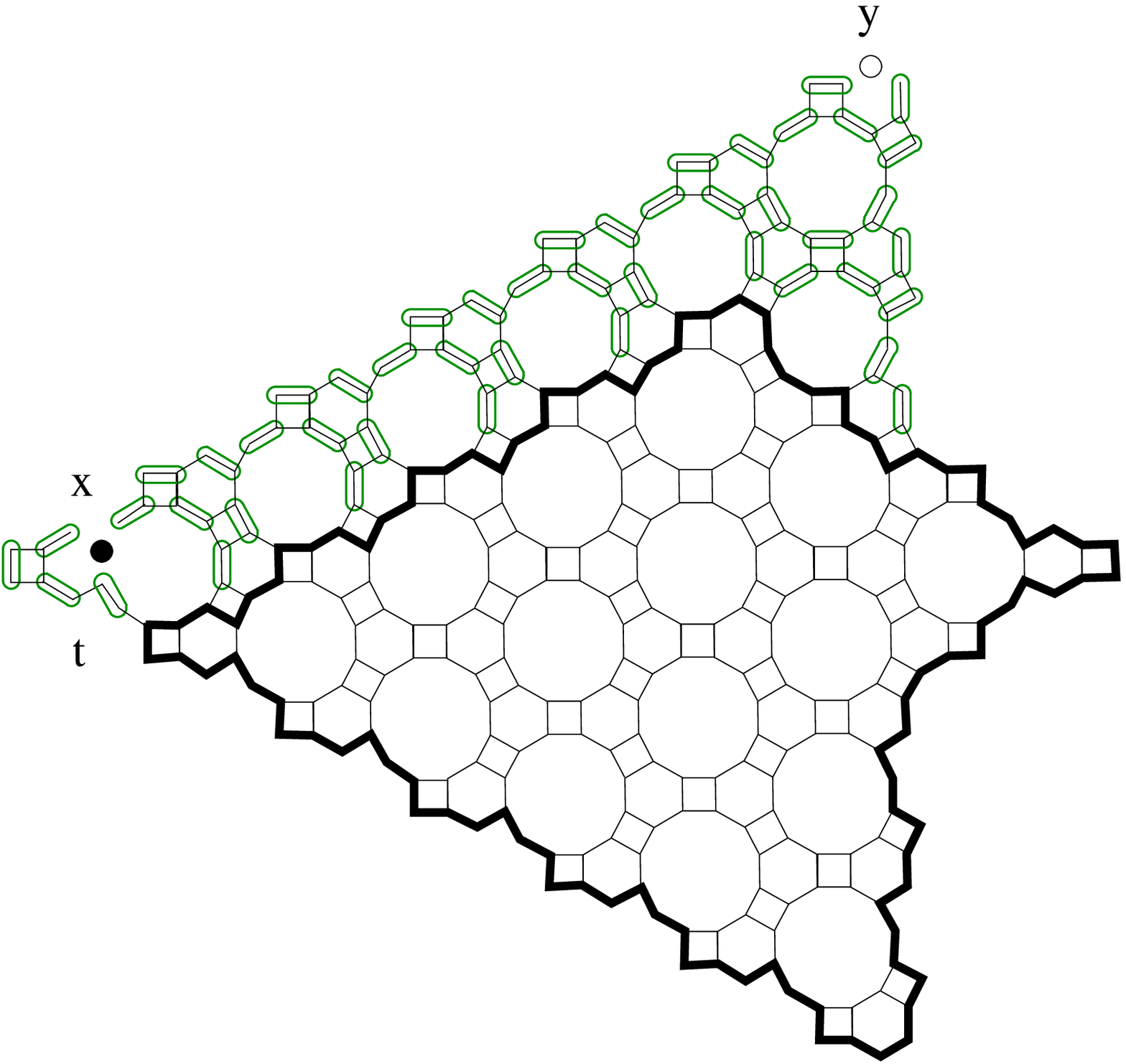}
\caption{Obtaining the dual graph of $D_{6,7,3}$ from the dual graph of $D_{8,8,3}$ by removing vertices $x$ and $y$.}
\label{dualD883xy}
\end{figure}

\begin{figure}
\includegraphics[width=10cm]{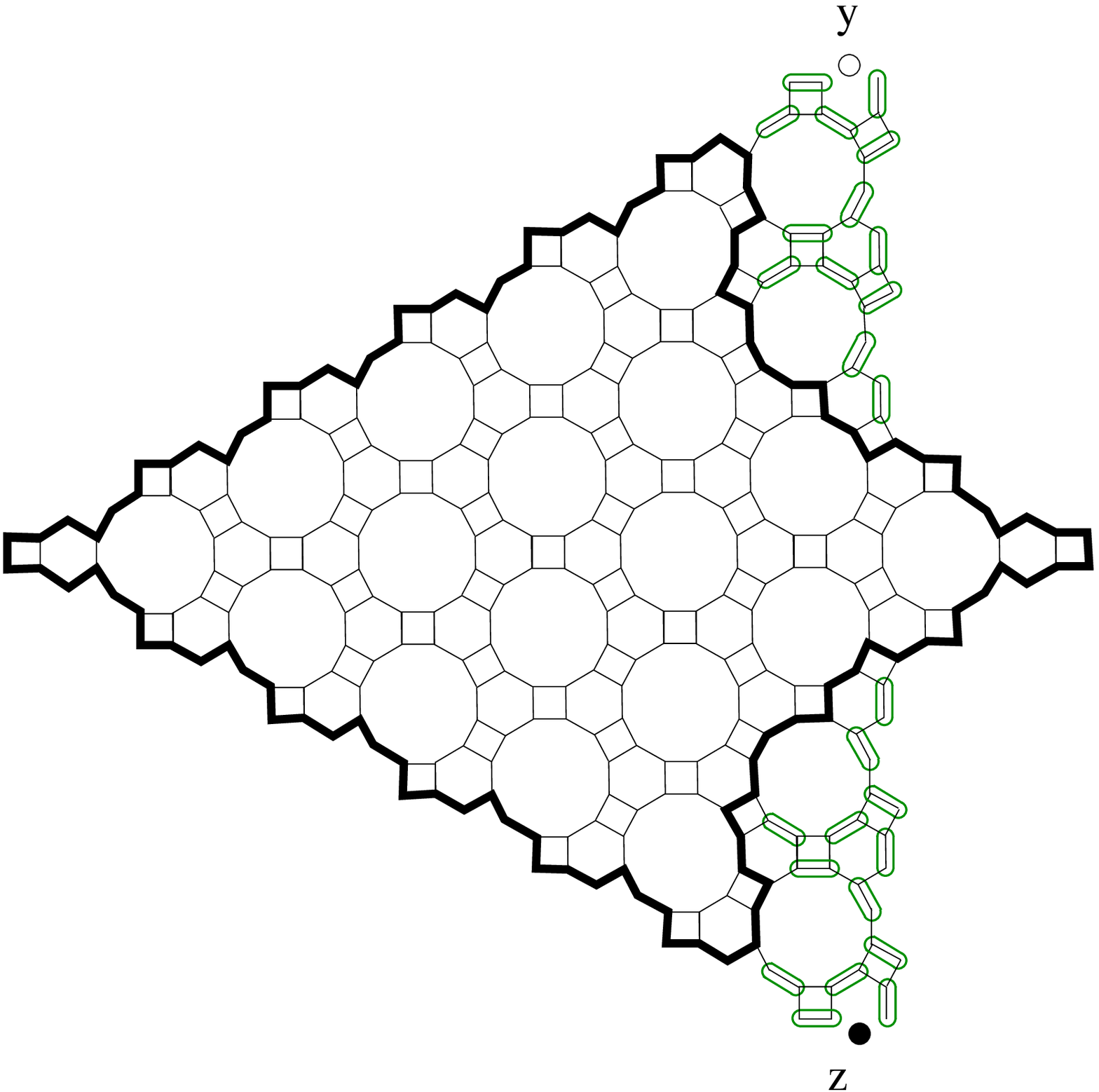}
\caption{Obtaining the dual graph of $D_{7,7,2}$ from the dual graph of $D_{8,8,3}$ by removing vertices $y$ and $z$.}
\label{dualD883yz}
\end{figure}

\begin{figure}
\includegraphics[width=10cm]{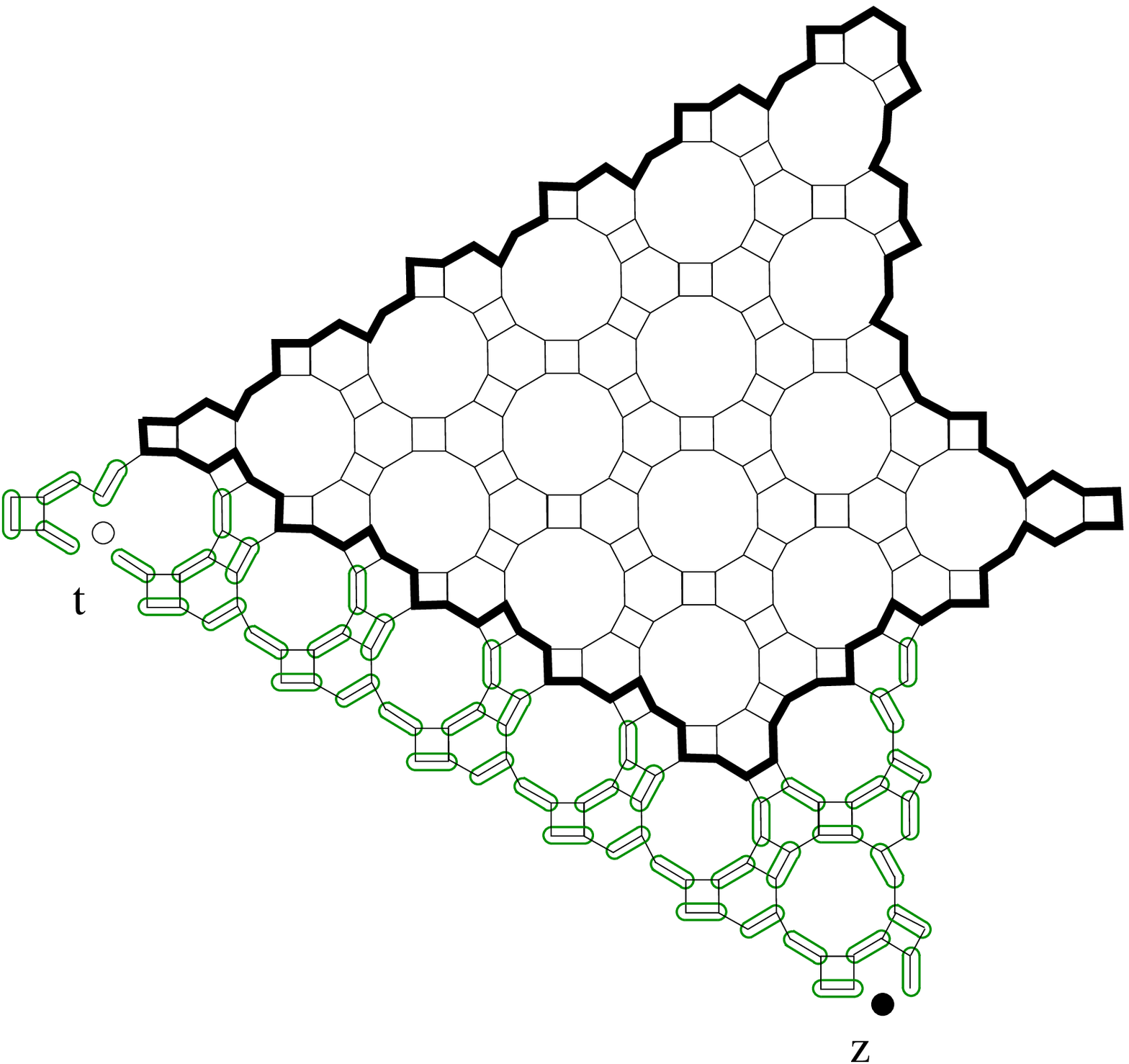}
\caption{Obtaining the dual graph of $D_{7,6,1}$ from the dual graph of $D_{8,8,3}$ by removing vertices $z$ and $t$.}
\label{dualD883zt}
\end{figure}

\begin{figure}
\includegraphics[width=10cm]{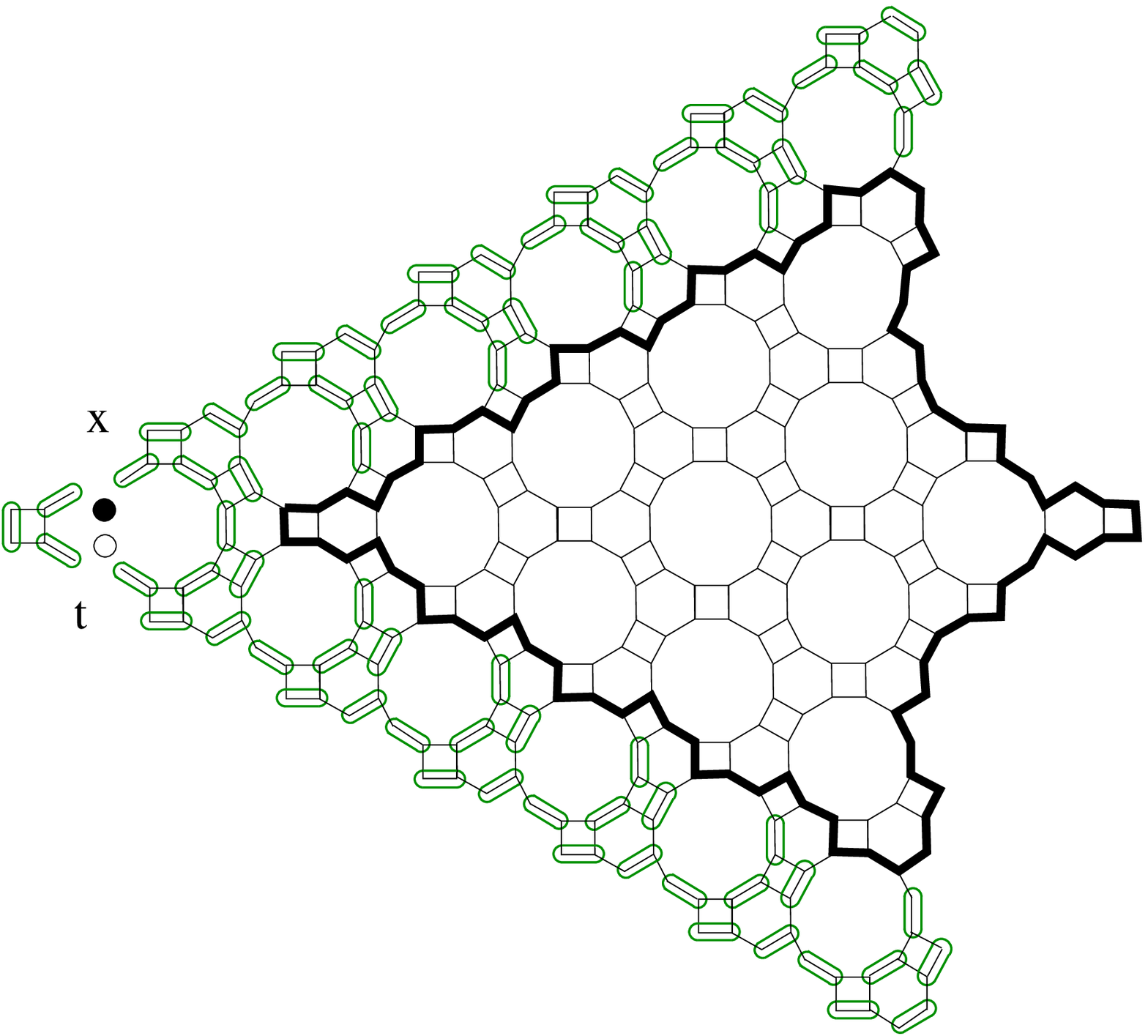}
\caption{Obtaining the dual graph of $D_{6,6,2}$ from the dual graph of $D_{8,8,3}$  by removing vertices $t$ and $x$.}
\label{dualD883tx}
\end{figure}

\begin{figure}
\includegraphics[width=10cm]{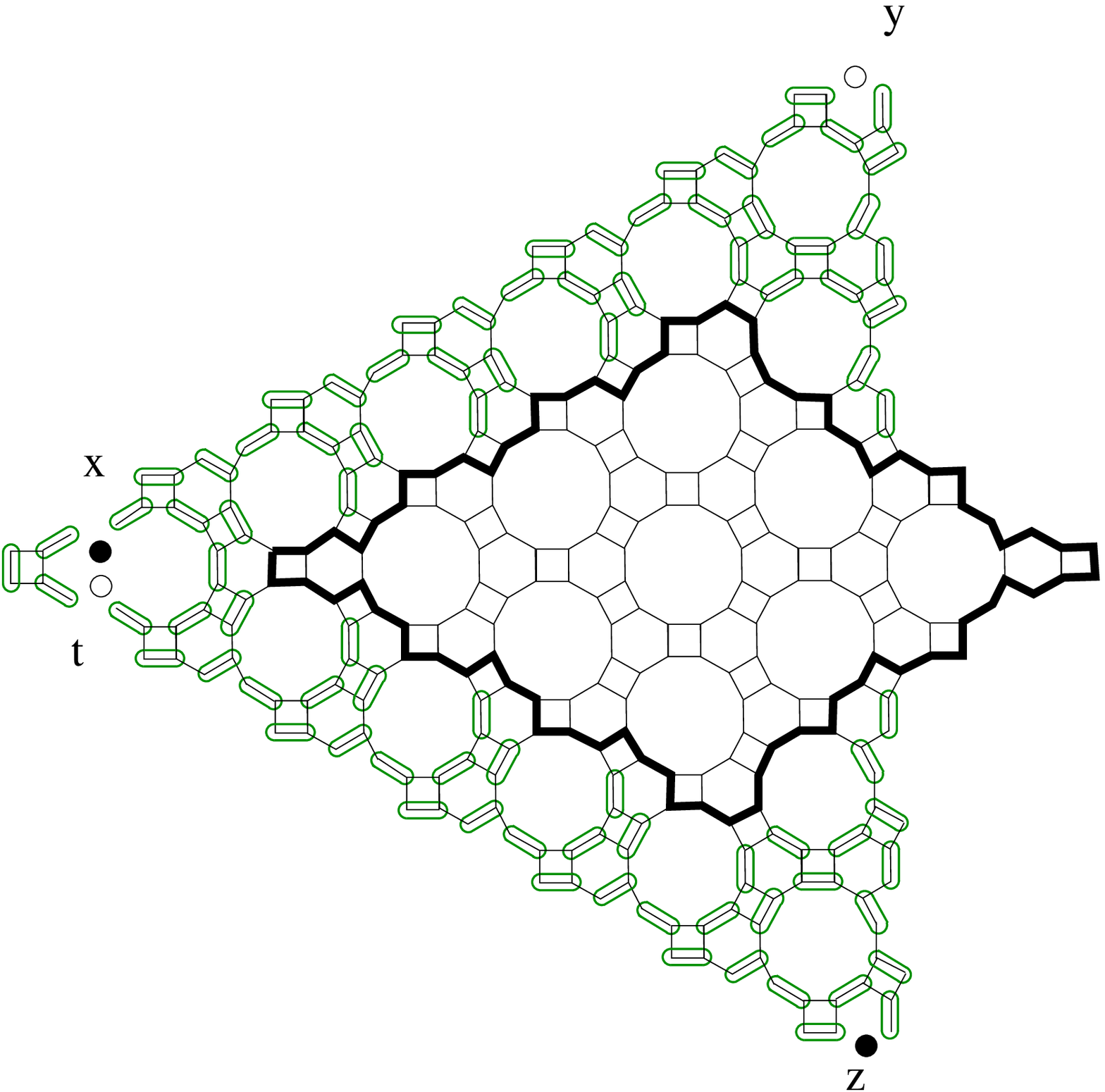}
\caption{Obtaining the dual graph of $D_{5,5,1}$ from the dual graph of $D_{8,8,3}$ by removing vertices $x,y,z$ and $t$.}
\label{dualD883xyzt}
\end{figure}

\begin{proof}
One readily checks that if $a$, $b$ and $c$ satisfy conditions (\ref{add7})--(\ref{add11}), then so do all the other five triples of indices that occur in (\ref{eq1-1}) and (\ref{eq2-1}). Therefore, if the region $D_{a,b,c}$ is defined, so are the other five $D$-type regions in (\ref{eq1-1}). A similar statement holds for equation (\ref{eq2-1}).

We prove first recurrence (\ref{eq1-1}). Denote by $G$ the dual graph of $D_{a,b,c}$. Consider four vertices $x,y,z,t$ as indicated in Figure \ref{dualD883} (which corresponds to $a=8$, $b=8$ and $c=3$). Namely, $x$ and $t$ are near the western ``corner" of the graph $G$, $y$ is at the northern corner, and $z$ is at the southern corner.  By removing the forced edges from the graph $G-\{x,y\}$, we obtain a graph isomorphic to the dual $H$ of the region $D_{a-2,b-2,c-1}$. This process is illustrated in Figure \ref{dualD883xy} for the case $a=8,$  $b=8,$ $c=3$; the circled edges are the forced edges, and the boundary of the dual graph of $D_{a-2,b-1,c}$ is indicated by the bold contour. Thus,
\begin{equation}\label{con1a1}
\M(G-\{x,y\})=\M(H)=\M(D_{a-2,b-1,c}).
\end{equation}
  Similarly, we get

\begin{equation}\label{con1a2}
\M(G-\{y,z\})=\M(D_{a-1,b-1,c-1}) \text{ (see Figure \ref{dualD883yz}),}
\end{equation}

\begin{equation}\label{con1a3}
\M(G-\{z,t\})=\M(D_{a-1,b-2,c-2}) \text{ (see Figure \ref{dualD883zt}),}
\end{equation}

 \begin{equation}\label{con1a4}
 \M(G-\{t,x\})=\M(D_{a-2,b-2,c-1}) \text{ (see Figure \ref{dualD883tx}),}
 \end{equation}

and
\begin{equation}\label{con1a5}
\M(G-\{x,y,z,t\})=\M(D_{a-3,b-3,c-2}) \text{ (see Figure \ref{dualD883xyzt}).}
\end{equation}
 Therefore, Theorem \ref{kuothm} and the five equalities (\ref{con1a1})--(\ref{con1a5}) imply (\ref{eq1-1}).

\begin{figure}
\includegraphics[width=8cm]{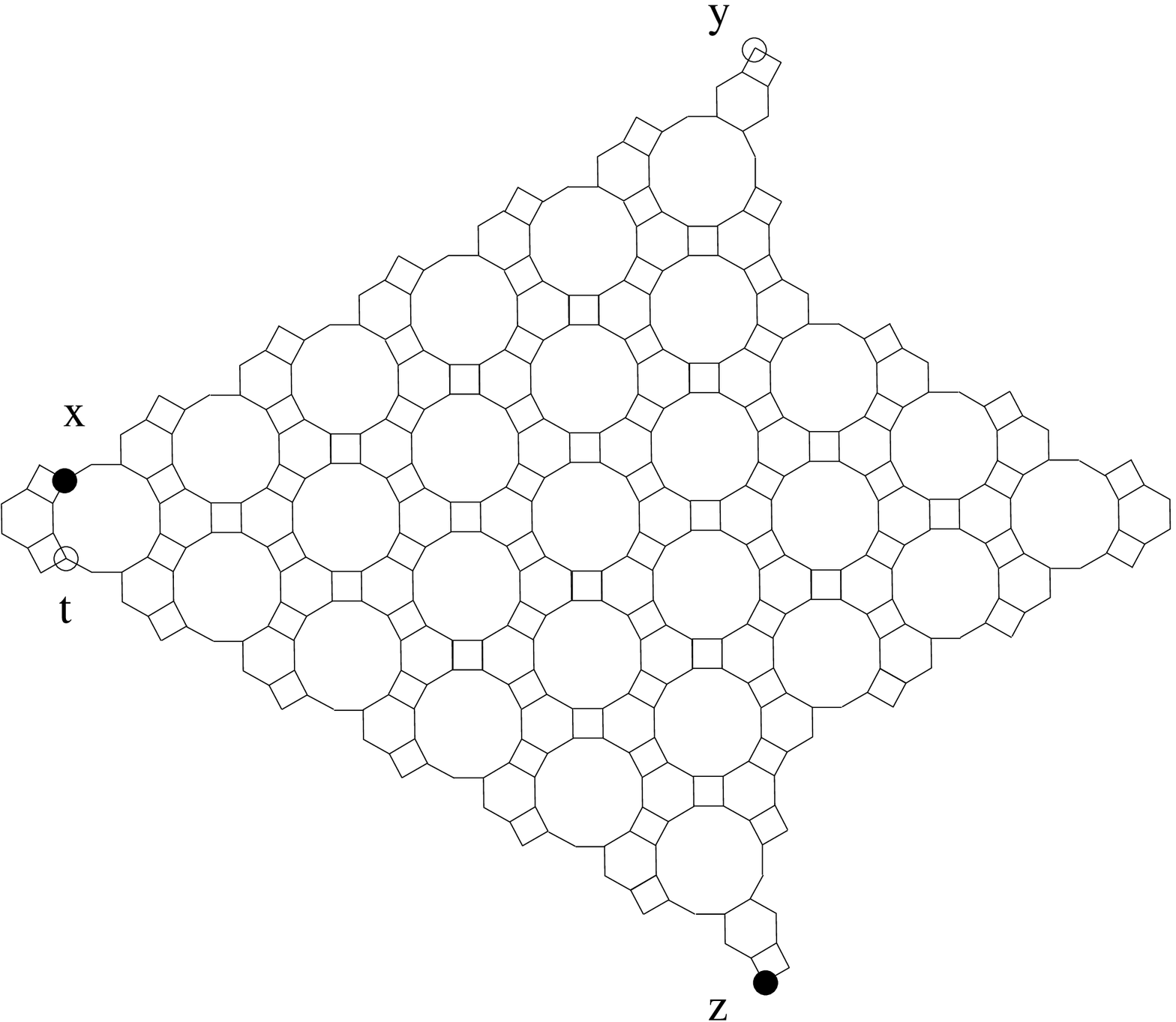}
\caption{The dual graph of $E_{8,8,2}$ and the four vertices $x,y,z,t$.}
\label{conden2}
\end{figure}
Recurrence (\ref{eq2-1}) can be proved similarly, by applying Theorem \ref{kuothm} to the dual graph of $E_{a,b,c}$, with vertices $x,y,z,t$ chosen at its corners as in shown in Figure \ref{conden2} (which illustrates the case $a=8,\ b=8,\ c=2$).
\end{proof}

\begin{lemma}\label{con3}    Let $a$, $b$ and $c$ be nonnegative integers satisfying $a\geq 2$, $b\geq4$, $2b-a-2c\geq2$, and $3b-2a-2c\geq2$.

$(${\rm a}$)$. If $c\geq 1$, then
  \begin{equation}\label{eq3-3}
  \begin{split}
   \M(D_{a,b,c})\M(D_{a-2,b-2,c})=\M(D_{a-1,b-1,c})^2\\+\M(D_{a,b,c+1})\M(D_{a-2,b-2,c-1})
  \end{split}
  \end{equation}
  and
  \begin{equation}\label{eq4-3}
  \begin{split}
   \M(E_{a,b,c})\M(E_{a-2,b-2,c})=\M(E_{a-1,b-1,c})^2\\+\M(E_{a,b,c+1})\M(E_{a-2,b-2,c-1}).
  \end{split}
  \end{equation}

$(${\rm b}$)$. If  $c=0$, then
   \begin{equation}\label{eq3-4}
     \begin{split}
      \M(D_{a,b,0})\M(D_{a-2,b-2,0})=\M(D_{a-1,b-1,0})^2\\+\M(D_{a,b,1})\M(D_{e,d,1})
      \end{split}
   \end{equation}
and
   \begin{equation}\label{eq4-4}
     \begin{split}
      \M(E_{a,b,0})\M(E_{a-2,b-2,0})=\M(E_{a-1,b-1,0})^2\\+\M(E_{a,b,1})\M(E_{e,d,1}),
      \end{split}
   \end{equation}
where, as usual, $d=2a-b-2c$ and $e=3a-2b-2c$.
    \end{lemma}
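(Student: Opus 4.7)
The plan is to apply Kuo's graphical condensation (Theorem \ref{kuothm}) to the dual graph $G$ of $D_{a,b,c}$ (respectively $E_{a,b,c}$), choosing four boundary vertices $x,y,z,t$ in cyclic order on the outer face, and then showing that in each case the five graphs $G-\{x,y\}$, $G-\{y,z\}$, $G-\{z,t\}$, $G-\{t,x\}$ and $G-\{x,y,z,t\}$ reduce, after removal of forced edges, to the duals of the regions appearing on the right-hand sides of (\ref{eq3-3})--(\ref{eq4-4}). Throughout, one first checks that conditions (\ref{add7})--(\ref{add11}) propagate to each of the smaller triples of parameters, so that every region in the recurrence is defined. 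This all parallels the argument used in Lemma \ref{con1}.

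For part (a), with $c\geq 1$, I would place $x,y$ symmetric to $z,t$ across the horizontal axis of symmetry of the boundary, with one pair near the west corner region and the other near the east. The symmetric placement ensures $\M(G-\{x,y\}) = \M(G-\{z,t\}) = \M(D_{a-1,b-1,c})$ (and analogously for $E$), each removal peeling off a single SW/SE layer and producing the squared term in (\ref{eq3-3}). The two asymmetric pairs $\{y,z\}$ and $\{t,x\}$ each cross the axis; one of them, via its forced-edge propagation, shortens the length-$c$ side by one while peeling off two boundary layers, producing $D_{a-2,b-2,c-1}$, while the other locally rearranges the boundary so as to \emph{lengthen} the length-$c$ side by one at the expense of the neighbouring NE/NW sides, producing $D_{a,b,c+1}$. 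Removing all four vertices peels off two full SW/SE layers and yields $D_{a-2,b-2,c}$. The forced-edge verifications are entirely analogous to those depicted in Figures \ref{dualD883xy}--\ref{dualD883xyzt}.

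For part (b), with $c=0$, the length-$c$ side of the contour collapses to a point and the natural placement for $\{y,z\}$ used in part (a) now falls off the boundary. The local picture near the (absent) top corner reflects the roles of the SW and SE sides, so that the forced-edge analysis applied to the corresponding pair of vertex removals produces the reflected region $D_{e,d,1}$ (respectively $E_{e,d,1}$) with $d=2a-b$ and $e=3a-2b$, accounting for the last factor in (\ref{eq3-4}) and (\ref{eq4-4}). The remaining four identifications are unchanged from part (a), specialized to $c=0$. Once the five identifications are in place, Theorem \ref{kuothm} yields (\ref{eq3-3}), (\ref{eq4-3}), (\ref{eq3-4}), and (\ref{eq4-4}) directly.

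The main obstacle is verifying the $D_{a,b,c+1}$ term (and its $E$-analogue), since its length-$c$ parameter is strictly larger than that of $D_{a,b,c}$; one must check that the forced edges propagating from the removal of $\{y,z\}$ indeed lengthen the corresponding side by one unit while shortening the adjacent sides by two each (consistent with the relations $d=2b-a-2c$ and $e=3b-2a-2c$ shortening by two when $c$ increases by one). The second nontrivial point is the separate treatment of $c=0$ in part (b), where the degenerate boundary forces a different vertex placement that produces the reflected region. Both points are ultimately routine case-by-case inspections of the forced-edge patterns once the correct vertex positions are identified.
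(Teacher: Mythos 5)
Your part (a) is in essence the paper's own argument: the paper applies Theorem \ref{kuothm} with $x,t$ placed together at the western corner and $y,z$ together at the eastern corner (the corner between the sides of lengths $d$ and $e$), and obtains exactly the five identifications you list, with $\{x,y\}$ and $\{z,t\}$ each giving $D_{a-1,b-1,c}$, $\{y,z\}$ giving $D_{a,b,c+1}$, $\{t,x\}$ giving $D_{a-2,b-2,c-1}$, and all four giving $D_{a-2,b-2,c}$. Your appeal to a ``horizontal axis of symmetry of the boundary'' is loose language (the regions $D_{a,b,c}$ and $E_{a,b,c}$ have no such symmetry in general, being symmetric only when $a=b$ and $c=f$), but it is inessential: the placement you intend is the paper's, and the forced-edge verifications are the routine inspections you describe (the paper does them via figures, treating $a\leq c+d$ and $a>c+d$ with separate pictures).

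Part (b), however, modifies the wrong pair, and this is a genuine gap. At $c=0$ the identification that breaks down is \emph{not} the one for $\{y,z\}$: that pair still yields $D_{a,b,c+1}=D_{a,b,1}$, which is perfectly well defined and is precisely the factor $\M(D_{a,b,1})$ needed in (\ref{eq3-4}). What fails is the $\{t,x\}$ identification, since $D_{a-2,b-2,c-1}$ would have $c$-parameter $-1$. The paper keeps the same four vertices, retains $\M(G-\{y,z\})=\M(D_{a,b,1})$, and shows that $G-\{x,t\}$, after removing forced edges, is isomorphic to the dual of $D_{e,d,1}$ (a region with side lengths $e,d,1,b-2,a-2,f+1$; see Figures \ref{conden9} and \ref{condensation3n}) --- that is, $D_{e,d,1}$ is just the re-description, starting from a different side, of the region obtained by peeling two layers off the western corner. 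With your assignment ($\{y,z\}\mapsto D_{e,d,1}$, ``the remaining four identifications unchanged''), Kuo's identity would read $\M(D_{a,b,0})\M(D_{a-2,b-2,0})=\M(D_{a-1,b-1,0})^2+\M(D_{a-2,b-2,-1})\M(D_{e,d,1})$, which involves an undefined region and lacks the factor $\M(D_{a,b,1})$, so it does not give (\ref{eq3-4}). Your stated mechanism is also incorrect: the side of length $c$ is adjacent to the southern corner, not to the eastern corner where $y$ and $z$ sit; since $d\geq 2$ by hypothesis, that eastern corner survives at $c=0$ and nothing ``falls off the boundary'' --- the only change is how the graph left after deleting $\{x,t\}$ and its forced edges must be named. (A side remark: the formula ``$d=2a-b-2c$, $e=3a-2b-2c$'' in the statement is a typo for $d=2b-a-2c$, $e=3b-2a-2c$, as the examples $D_{7,6,1}$ for $(a,b,c)=(4,5,0)$ and $D_{2,4,1}$ for $(8,6,0)$ in the paper's figures show; you copied the typo, which is not your fault, but the intended region $D_{e,d,1}$ is the one given by the latter convention.)
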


\begin{proof}

\begin{figure}\centering
\includegraphics[width=8cm]{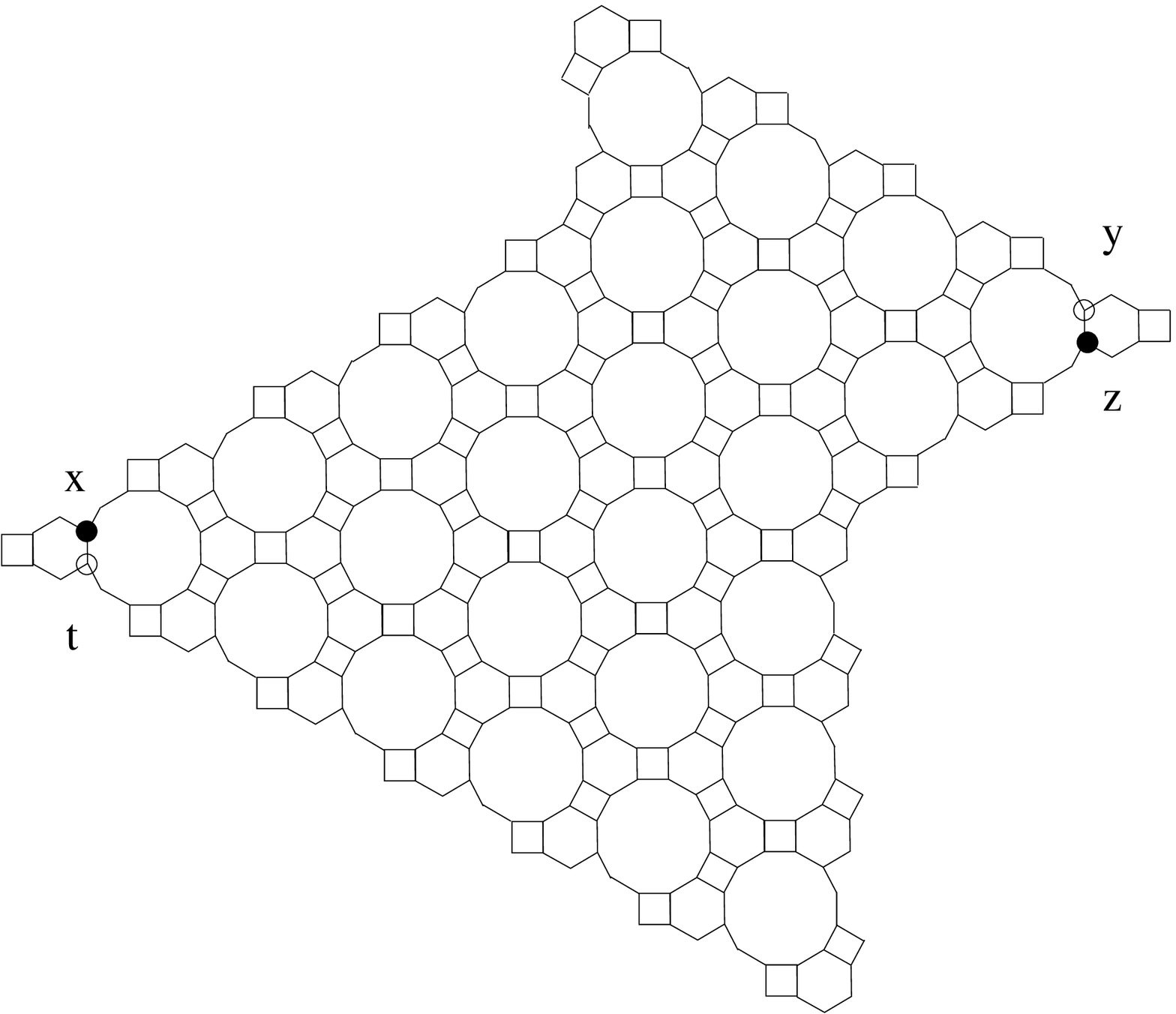}
\caption{The dual graph of $D_{5,8,4}$ and the four vertices $x,y,z,t$.}
\label{condensation4n}
\end{figure}

\begin{figure}\centering
\includegraphics[width=10cm]{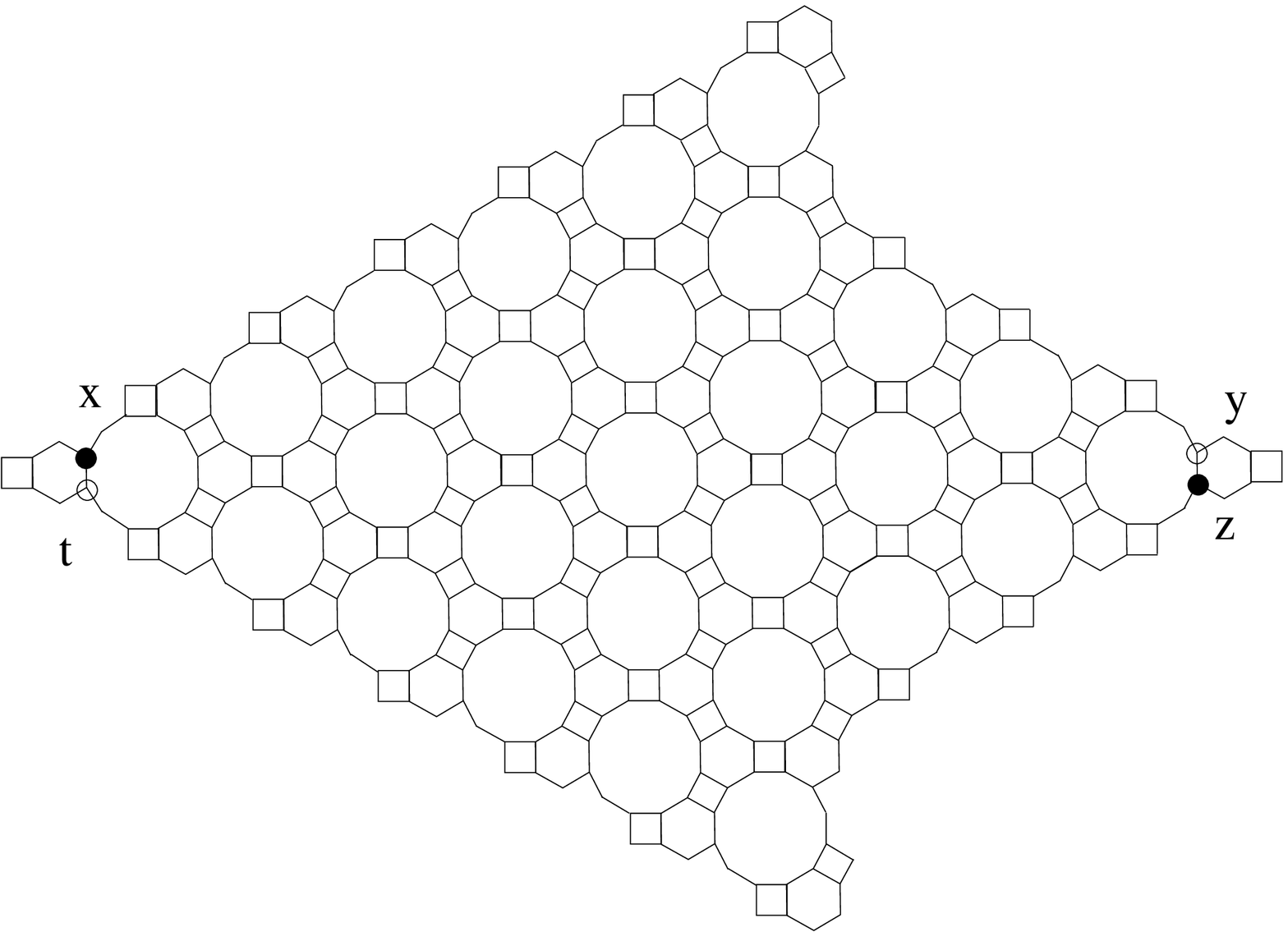}
\caption{The dual graph of $D_{8,8,2}$  and  the four vertices $x,y,z,t$.}
\label{condensation1n}
\end{figure}

As in the proof of Lemma \ref{con1}, one can readily check that if the region $D_{a,b,c}$ is defined (i.e., if $a$, $b$ and $c$ satisfy conditions (\ref{add7})--(\ref{add11})), then all the other five regions in (\ref{eq3-3}) are also defined. Analogous statements hold for the recurrences (\ref{eq4-3})--(\ref{eq4-4}).

(a). We present the proof of (\ref{eq3-3}) (the proof of (\ref{eq4-3}) is perfectly analogous).

In the case $a\leq c+d$, apply Theorem \ref{kuothm} to the dual graph $G$ of $D_{a,b,c}$ with the vertices $x,y,z,t$ chosen as shown in Figure \ref{condensation4n} (in that figure, the values of the parameters are $a=5$, $b=8$ and $c=4$). On the other hand, if $a> c+d$, apply Theorem \ref{kuothm} to the dual graph $G$ of $D_{a,b,c}$ with the vertices $x,y,z,t$ chosen as shown in Figure \ref{condensation1n} (where $a=8$, $b=8$ and $c=2$). Arguing similarly as in the proof of Lemma \ref{con1}, we obtain that
\begin{equation}\label{con3a1}
\M(G-\{x,y\})=\M(D_{a-1,b-1,c}),
\end{equation}
\begin{equation}\label{con3a2}
\M(G-\{y,z\})=\M(D_{a,b,c+1}),
\end{equation}
\begin{equation}\label{con3a3}
\M(G-\{z,t\})=\M(D_{a-1,b-1,c}),
\end{equation}
\begin{equation}\label{con3a4}
\M(G-\{t,x\})=\M(D_{a-2,b-2,c-1}),
\end{equation}
\begin{equation}\label{con3a5}
\M(G-\{x,y,z,t\})=\M(D_{a-2,b-2,c}).
\end{equation}
 Therefore, (\ref{eq3-3}) is obtained by substituting the equalities (\ref{con3a1})--(\ref{con3a5}) in the recurrence of  Theorem \ref{kuothm}.

\begin{figure}\centering
\includegraphics[width=12cm]{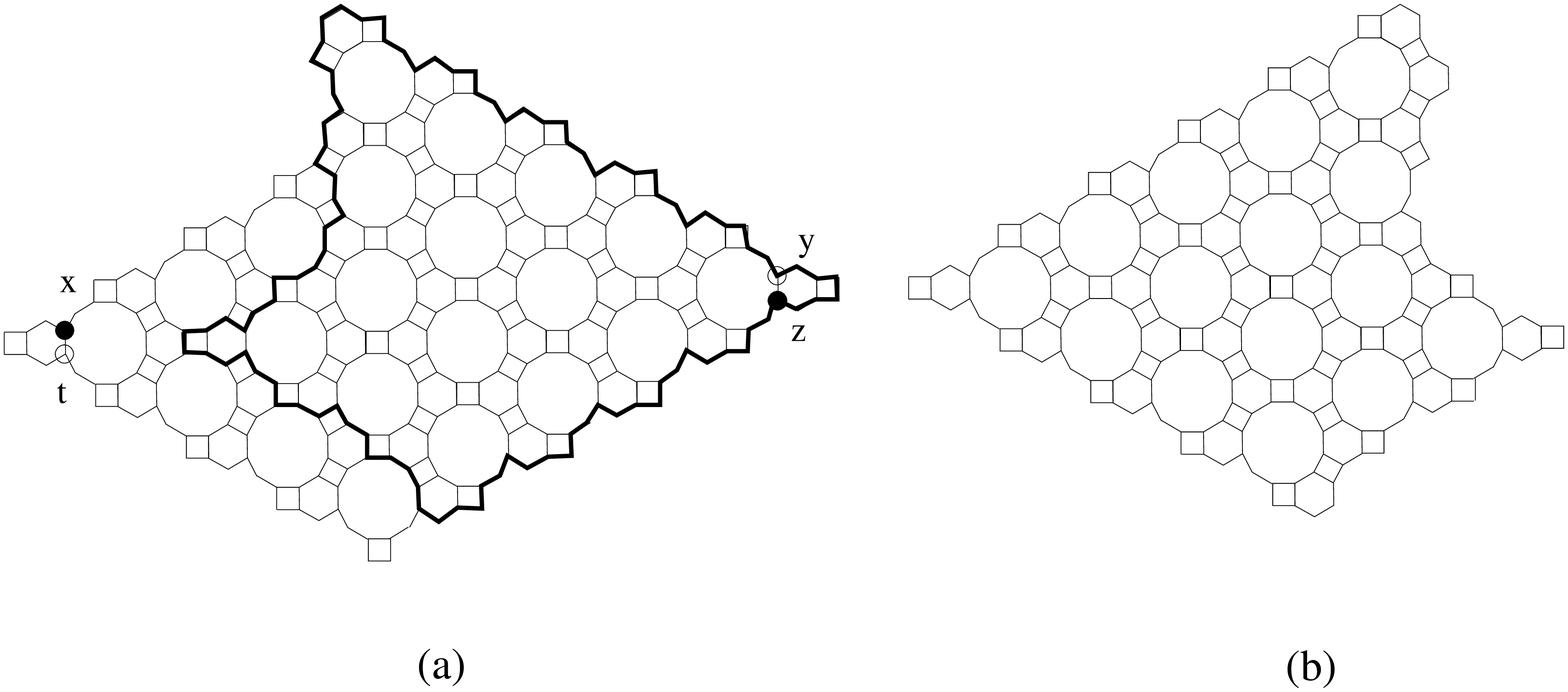}
\caption{The dual graph of $D_{4,5,0}$ (a), and the dual graph  of $D_{7,6,1}$ (b).}
\label{conden9}
\end{figure}

\begin{figure}\centering
\includegraphics[width=12cm]{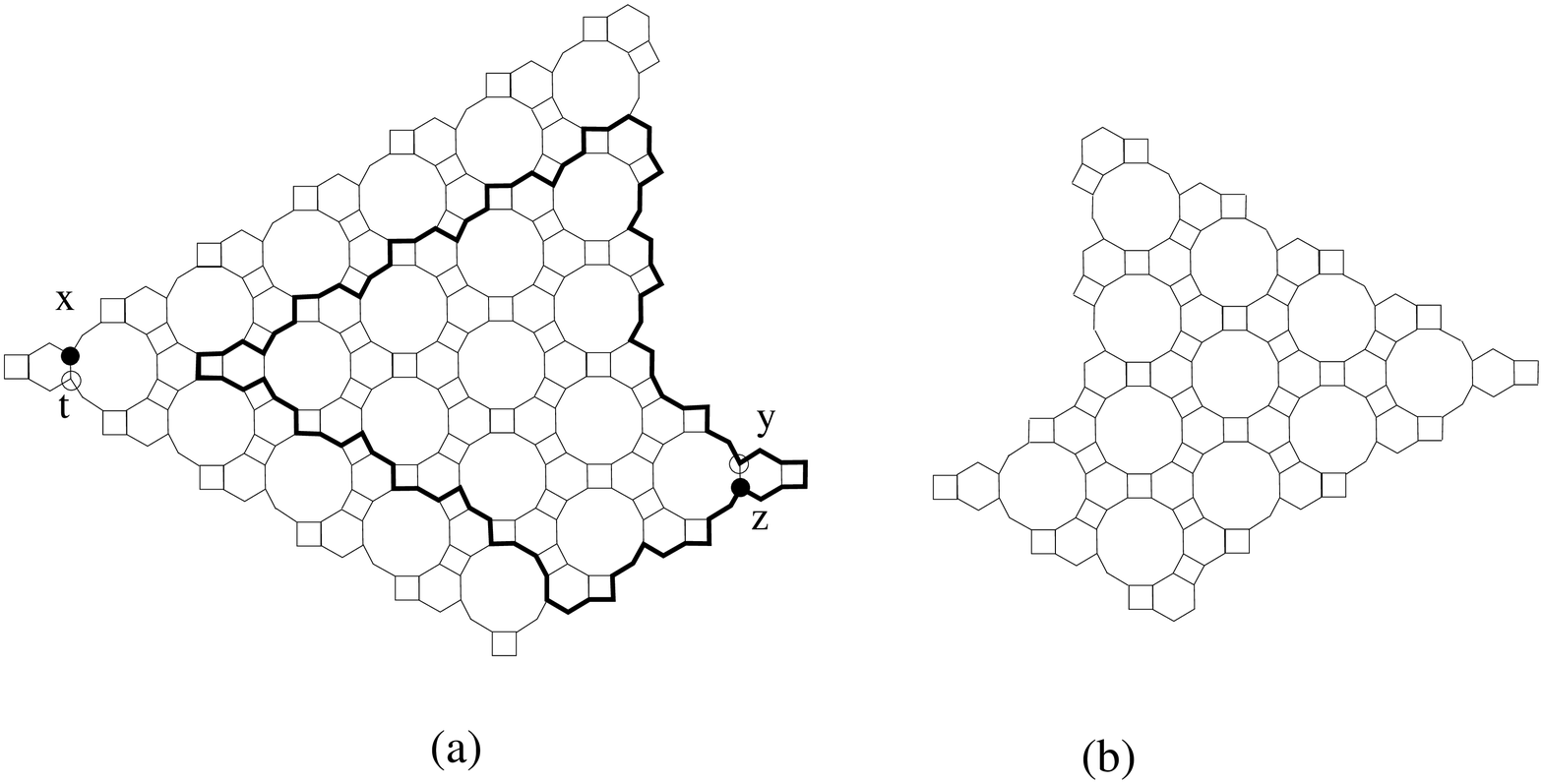}
\caption{The dual graph of $D_{8,6,0}$ (a), and the dual graph of $D_{2,4,1}$ (b).}
\label{condensation3n}
\end{figure}

(b). Again, we prove only (\ref{eq3-4}), as (\ref{eq4-4}) is obtained in a perfectly analogous manner.  Equation (\ref{eq3-4}) can be treated similarly to (\ref{eq3-3}), using Figure \ref{conden9} for the case $a\leq c+d$ ($a=4$, $b=5$ and $c=0$ in Figure \ref{conden9}), and Figure \ref{condensation3n} for the case $a>c+d$ (in Figure \ref{condensation3n}, we have $a=8$, $b=6$ and $c=0$). In other words, we still pick $x,t$ at the western corner, and $y,z$ at the eastern corner of the graph. We still get the four equalities (\ref{con3a1}), (\ref{con3a2}), (\ref{con3a3}), and (\ref{con3a5}). However, the graph obtained from $G-\{x,t\}$ by removing forced edges  (the boundary of this graph is indicated by the bold contours in Figures \ref{conden9}(a) and \ref{condensation3n}(a)) is now isomorphic to the dual graph of $D_{e,d,1}$ (the latter is indicated in Figures \ref{conden9}(b) and \ref{condensation3n}(b)). Thus,
\begin{equation}\label{con3a6}
\M(G-\{x,t\})=\M(D_{e,d,1}),
\end{equation}
and (\ref{eq3-4}) follows Theorem \ref{kuothm}.
\end{proof}


 \begin{lemma}\label{con4} Assume that $a,b,c$ are three nonnegative integers satisfying $a\geq2$, $b\geq5$ and $c\geq2$. As usual, let $d=2b-a-2c$. Assume in addition that $a\leq c+d$.

 $(${\rm a}$)$. If $d\geq1$, then
 \begin{equation}\label{eq3-1}
  \begin{split}
         \M(D_{a,b,c})\M(D_{a-2,b-3,c-2})=\M(D_{a-1,b-1,c})\M(D_{a-1,b-2,c-2})\\+\M(D_{a-2,b-2,c-1})\M(D_{a,b-1,c-1})
  \end{split}
  \end{equation}
  and
 \begin{equation}\begin{split}\label{eq4-1}
        \M(E_{a,b,c})\M(E_{a-2,b-3,c-2})=\M(E_{a-1,b-1,c})\M(E_{a-1,b-2,c-2})\\+\M(E_{a-2,b-2,c-1})\M(E_{a,b-1,c-1}).
  \end{split}
  \end{equation}

 $(${\rm b}$)$. If $d=0$, then
  \begin{equation}\label{eq3-2}
    \begin{split}
       \M(D_{a,b,c})\M(D_{a-2,b-3,c-2})=\M(E_{c,b-1,a-1})\M(D_{a-1,b-2,c-2})\\+\M(D_{a-2,b-2,c-1})\M(D_{a,b-1,c-1})
    \end{split}
    \end{equation}
and
  \begin{equation}\label{eq4-2}
    \begin{split}
    \M(E_{a,b,c})\M(E_{a-2,b-3,c-2})=\M(D_{c,b-1,a-1})\M(E_{a-1,b-2,c-2})\\+\M(E_{a-2,b-2,c-1})\M(E_{a,b-1,c-1}).
    \end{split}
    \end{equation}
\end{lemma}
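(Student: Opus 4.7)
The plan is to prove the four recurrences (\ref{eq3-1})--(\ref{eq4-2}) by applying Kuo's graphical condensation theorem (Theorem \ref{kuothm}) to the dual graph $G$ of $D_{a,b,c}$, respectively $E_{a,b,c}$, with four vertices $x,y,z,t$ placed on the outer face. The template is the one used to prove Lemmas \ref{con1} and \ref{con3}: each of the five subgraphs $G-\{x,y\}$, $G-\{y,z\}$, $G-\{z,t\}$, $G-\{t,x\}$, and $G-\{x,y,z,t\}$ is reduced, by removing forced edges, to the dual graph of a $D$- or $E$-region, and substitution of these equalities into Theorem \ref{kuothm} produces the desired recurrence. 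As a preliminary step I would check that the constraints (\ref{add7})--(\ref{add11}) are satisfied for each of the five parameter triples in each recurrence, so that all regions referred to are well-defined; for part (a) this is a short arithmetic check using $d\geq 1$, and for part (b) the relation $2b=a+2c$ forced by $d=0$ reduces the check for $E_{c,b-1,a-1}$ (respectively $D_{c,b-1,a-1}$) to another short computation.

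For part (a), since $a\leq c+d$, the contour of $D_{a,b,c}$ has the shape shown in Figure \ref{D5,8,4}. Place $x$ and $t$ at the western corner of $G$ (one just above and one just below the horizontal tip), and $y$ and $z$ at the two corners flanking the eastern side of length $d$. The usual forced-edge argument, identical in spirit to the one carried out in detail in the proof of Lemma \ref{con1}, shows that $G-\{x,y\}$, $G-\{y,z\}$, $G-\{z,t\}$, $G-\{t,x\}$, and $G-\{x,y,z,t\}$ are isomorphic (after forced-edge removal) to the dual graphs of $D_{a-1,b-1,c}$, $D_{a,b-1,c-1}$, $D_{a-1,b-2,c-2}$, $D_{a-2,b-2,c-1}$, and $D_{a-2,b-3,c-2}$, respectively. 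Substituting into Theorem \ref{kuothm} yields (\ref{eq3-1}); the identical construction on the dual graph of $E_{a,b,c}$ yields (\ref{eq4-1}).

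For part (b), when $d=0$, the eastern side of length $d$ collapses and the two corners that carried $y$ and $z$ in part (a) merge into a single corner. With $x,t$ still at the western corner and $y,z$ positioned on either side of this merged eastern corner, four of the five reductions still go through as in part (a): $G-\{y,z\}$, $G-\{z,t\}$, $G-\{t,x\}$, and $G-\{x,y,z,t\}$ reduce, as before, to the dual graphs of $D_{a,b-1,c-1}$, $D_{a-1,b-2,c-2}$, $D_{a-2,b-2,c-1}$, and $D_{a-2,b-3,c-2}$. The essential change is in $G-\{x,y\}$: because the corner between $x$ and $y$ has now degenerated, the region obtained from $G-\{x,y\}$ after forced-edge removal is no longer a $D$-region with parameters $(a-1,b-1,c)$ (a triple which in fact fails condition (\ref{add10}) when $d=0$), but rather an $E$-region with parameters $(c,b-1,a-1)$, to be read off from the residual boundary. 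Substituting into Theorem \ref{kuothm} gives (\ref{eq3-2}); the same analysis applied to the dual graph of $E_{a,b,c}$ gives (\ref{eq4-2}), with the roles of $D$ and $E$ exchanged in the flipped term.

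The main obstacle is the geometric verification in part (b) that the residual region of $G-\{x,y\}$, after forced-edge removal, really is isomorphic to the dual graph of $E_{c,b-1,a-1}$ (and, in the $E$-case, that of $D_{c,b-1,a-1}$). The interchange of the roles of $a$ and $c$, together with the flip between $D$- and $E$-types, is a consequence of how the boundary of the residual region must be traced around the collapsed corner and then matched against the contour $\mathcal{C}(c,b-1,a-1)$ of the opposite region type; this is the one step in the proof that does not reduce to a routine transcription of the arguments in Lemmas \ref{con1} and \ref{con3}, and it is where the hypothesis $2b=a+2c$ plays its decisive role.
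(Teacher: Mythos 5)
Your overall strategy is the right one and is the same as the paper's: apply Theorem \ref{kuothm} to the dual graph of $D_{a,b,c}$ (resp.\ $E_{a,b,c}$), reduce the five vertex-deleted graphs by forced edges to dual graphs of smaller $D$- and $E$-regions, and note that when $d=0$ the triple $(a-1,b-1,c)$ violates (\ref{add10}), which is exactly why the $\{x,y\}$-term must be replaced by $E_{c,b-1,a-1}$ (resp.\ $D_{c,b-1,a-1}$). The five claimed identifications also match the paper's (\ref{con4a1})--(\ref{con4a5}). However, your placement of the four vertices --- which is the entire substantive content of this proof --- is not the one that makes these identifications true. You put $y$ and $z$ at the two corners flanking the side of length $d$; the paper's proof (Figure \ref{condensation5n}) puts $x,t$ at the western corner, $y$ at the eastern corner (between the $d$- and $e$-sides), and $z$ at the \emph{southern} corner, i.e.\ at the junction of the $b$- and $c$-sides, which is at distance $c\geq 2$ from the corner you chose for $z$.

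This difference is not cosmetic. The region $D_{a,b-1,c-1}$ claimed for $G-\{x... \}$, more precisely for $G-\{y,z\}$, has the same $d$-parameter as $D_{a,b,c}$ but has its $b$-, $c$-, $e$- and $f$-sides shorter by one; producing it by forced edges requires one of the deleted cells to sit at the southern corner so that the cascade shortens the $b$- and $c$-sides. Deleting two cells at the two ends of the $d$-side instead disturbs the $d$- and $e$-sides and leaves the faraway southern side untouched --- compare Lemma \ref{con3}, where taking $y,z$ at the eastern corner yields $D_{a,b,c+1}$, whose $d$- and $e$-parameters drop by $2$ while $b$ is unchanged. Likewise $G-\{z,t\}$ must become $D_{a-1,b-2,c-2}$, whose $b$-side is shorter by two, which again needs $z$ adjacent to the $b$-side; with your $z$ this reduction fails as well. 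The same misplacement propagates into your part (b): in the paper $z$ stays at the southern corner (Figure \ref{conden10}), and it is only the eastern corner, where $y$ sits, that degenerates when $d=0$, changing solely the $\{x,y\}$-reduction to $E_{c,b-1,a-1}$. So the proposal needs $z$ moved to the southern corner, and (as in the paper) the forced-edge reductions then have to be actually verified for that placement; as written, the key identifications would not hold.
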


{\bf Note.} Unlike in the case of Lemma \ref{con3}, it is not immediately apparent from recurrences (\ref{eq3-1}) and (\ref{eq4-1}) that their form does not hold for $d=0$, because the parameter $d$ of the $D$- and $E$-regions is not displayed. The reason why for $d=0$ the recurrences take on the changed form given in (\ref{eq3-2}) and (\ref{eq4-2}) is that the first regions on the right of the former recurrences have their $d$-parameter one unit smaller that the $d$-parameter of the regions $D_{a,b,c}$ and $E_{a,b,c}$.

 \begin{proof}
(a). We prove first the recurrence (\ref{eq3-1}); (\ref{eq4-1}) is obtained by arguing similarly.

\begin{figure}\centering
\includegraphics[width=8cm]{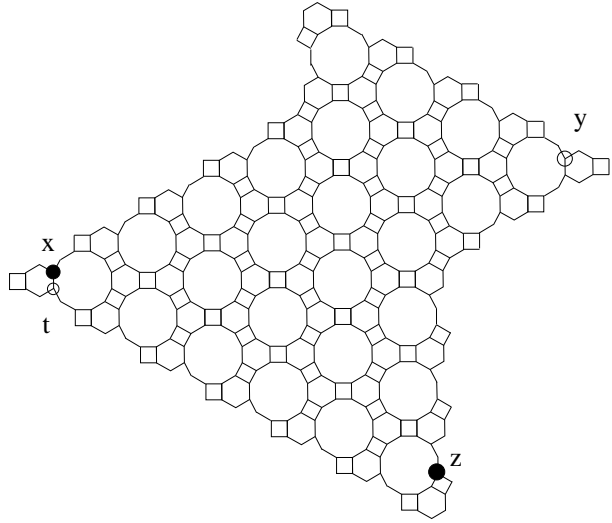}
\caption{The dual graph of $D_{5,8,4}$ and the vertices $x,y,z,t$.}
\label{condensation5n}
\end{figure}

Apply Theorem \ref{kuothm} to the dual graph $G$ of $D_{a,b,c}$ with the vertices $x,y,z,t$ chosen as indicated in Figure \ref{condensation5n}, for $a=5$, $b=8$ and $c=4$. More precisely, we pick $x$ and $t$ at the western corner, $y$ at the eastern corner, and $z$ at the southern corner of $G$. Similarly to the proofs of Theorems \ref{con1} and \ref{con3}, by removing forced edges we get the following facts:
\begin{equation}\label{con4a1}
\M(G-\{x,y\})=\M(D_{a-1,b-1,c}),
\end{equation}
\begin{equation}\label{con4a2}
\M(G-\{y,z\})=\M(D_{a,b-1,c-1}),
\end{equation}
\begin{equation}\label{con4a3}
\M(G-\{z,t\})=\M(D_{a-1,b-2,c-2}),
\end{equation}
\begin{equation}\label{con4a4}
\M(G-\{t,x\})=\M(D_{a-2,b-2,c-1}),
\end{equation}
\begin{equation}\label{con4a5}
\M(G-\{x,y,z,t\})=\M(D_{a-2,b-3,c-2}).
\end{equation}
 Therefore, (\ref{eq3-1}) follows Theorem \ref{kuothm} and five equalities (\ref{con4a1})--(\ref{con4a5}).

(b). Recurrence (\ref{eq3-2}) can be obtained similarly to (\ref{eq3-1}), by choosing the points $x,y,z,t$ as shown (for $a=4$, $b=8$ and $c=6$) in Figure \ref{conden10}. The four equalities (\ref{con4a2})--(\ref{con4a5}) still hold. The only difference is that the graph obtained from $G-\{x,y\}$ by removing forced edges (see the graph restricted by the bold contour in Figure \ref{conden10}(a)) is now isomorphic to the dual graph of $E_{c,b-1, a-1}$ (illustrated in Figure \ref{conden10}(b)). Therefore, we have the following equality instead of (\ref{con4a1})
\begin{equation}
\M(G-\{x,y\})=\M(E_{c,b-1,a-1}),
\end{equation}
and we obtain (\ref{eq3-2}).

\begin{figure}\centering
\includegraphics[width=12cm]{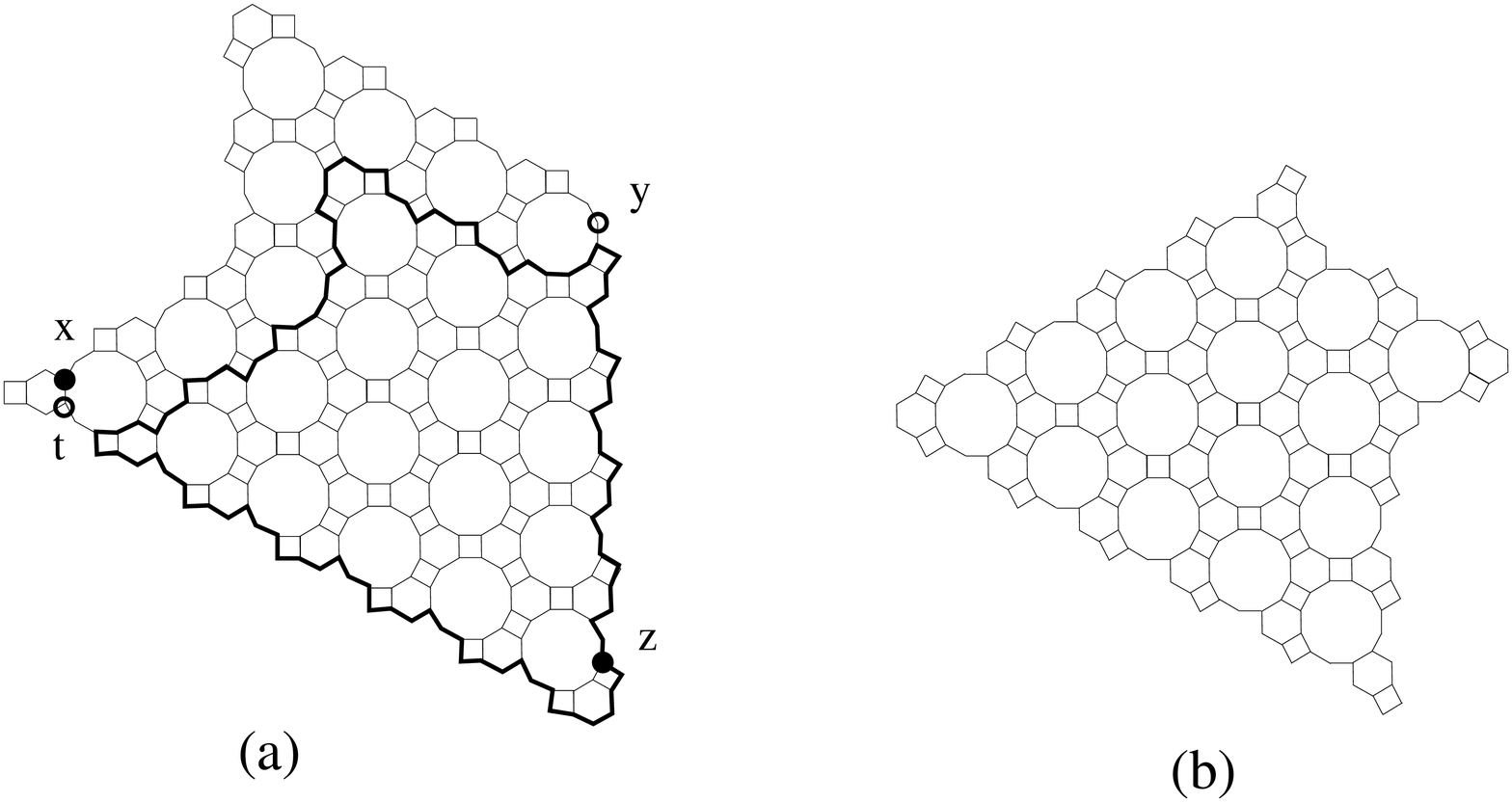}
\caption{The dual graph of $D_{4,8,6}$ (a), and the dual graph of $E_{6,7,3}$ (b). }
\label{conden10}
\end{figure}

The proof of the recurrence (\ref{eq4-2}) is perfectly analogous.
\end{proof}

\section{Recurrences for $\phi(a,b,c)$ and $\psi(a,b,c)$}

We show that the  functions $\phi(a,b,c)$ and $\psi(a,b,c)$ defined by (\ref{fipsistart})--(\ref{fipsiend}) satisfy the same recurrences as the numbers $\M(D_{a,b,c})$ and $\M(E_{a,b,c})$ were shown to satisfy in Section 4.

\begin{lemma}\label{form1} For any integers $a$, $b$ and $c$ we have
\begin{equation}\label{recur1}
\begin{split}
\phi(a,b,c)\phi(a-3,b-3,c-2)=\phi(a-2,b-1,c)\phi(a-1,b-2,c-2)\\+\phi(a-1,b-1,c-1)\phi(a-2,b-2,c-1)
\end{split}
\end{equation}
and
\begin{equation}\label{recur1b}
\begin{split}
\psi(a,b,c)\psi(a-3,b-3,c-2)=\psi(a-2,b-1,c)\psi(a-1,b-2,c-2)\\+\psi(a-1,b-1,c-1)\psi(a-2,b-2,c-1).
\end{split}
\end{equation}
\end{lemma}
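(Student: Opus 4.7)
The plan is to verify both recurrences by direct computation that, after dividing out common powers of $13$ and $14$, reduces them to a finite case check on the residue $r:=(3b+a-c)\bmod 6$. I would write each $\phi$-value as $h(\cdot)\,13^{g(\cdot)}\,14^{f(\cdot)}$ (and similarly $\psi$ with $p$), then compute separately, on each of the three product pairs in \eqref{recur1}, the total exponent of $13$, the total exponent of $14$, and the product of prefactors.

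For the $13$-exponents I would split $g=G+F$ with $G(a,b,c)=(b-a)(b-c)$ polynomial and $F(a,b,c)=\lfloor(a-c)^2/3\rfloor$. A straightforward expansion shows that the $G$-sum on the LHS matches that of the second RHS product, while the $G$-sum on the first RHS product is exactly $1$ smaller. The $F$-sum depends only on $k:=(a-c)\bmod 3$; splitting into three cases gives that the $F$-sum on the LHS agrees with that of the second RHS product, while the first-RHS $F$-sum exceeds the LHS one by $1$ if $k\not\equiv 2$ and by $2$ if $k\equiv 2$. Combining, the total $13$-exponent of the second RHS product always agrees with that of the LHS, while the first RHS product has the same $13$-exponent as the LHS unless $k\equiv 2$, in which case it is exactly one larger. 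An analogous treatment of $f(a,b,c)=\lfloor(a-b+c)^2/4\rfloor$, which depends only on $n:=a-b+c$, via a parity split in $n$ shows that the $14$-exponents of both RHS products equal the LHS one if $n$ is odd, and are one less if $n$ is even.

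The key observation is that $3b\equiv 0\pmod 3$ and $3b+a-c\equiv a+b+c\equiv a-b+c\pmod 2$, so the single residue $r$ determines both $k\bmod 3$ and the parity of $n$; moreover $3b+a-c$ shifts in a fixed pattern (by $+2,+1,+1,+3,+5\pmod 6$) under the five argument translations occurring in the recurrence, so the prefactor products $H_1,H_2,H_3$ (corresponding to the LHS and the two RHS terms) are determined by $r$. After dividing both sides of the recurrence by the appropriate common power of $13$ and $14$, the recurrence reduces in each class $r\in\{0,1,2,3,4,5\}$ to one of the elementary integer identities
\[ H_1=H_2+H_3,\qquad 14H_1=H_2+H_3,\qquad H_1=13H_2+H_3,\qquad 14H_1=13H_2+H_3. \]
For $\phi$ these become in turn $14=9+5$, $3=1+2$, $28=13+15$, $5=4+1$, $28=25+3$, $15=13+2$, all of which check directly from the table for $h$; for $\psi$ the computation is perfectly analogous, using the table for $p$, and produces the same six identities in the same classes.

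The main obstacle is purely bookkeeping: keeping track of the $+1$ jumps in the $13$- and $14$-exponents across different residue classes, and verifying that they are correctly absorbed as the $13$ or $14$ factor on the appropriate side of the reduced arithmetic identity. No tools beyond the definitions of $\phi,\psi$ and elementary modular arithmetic are required.
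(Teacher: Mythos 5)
Your proposal is correct and takes essentially the same route as the paper: the same bookkeeping facts about the sums of the $f$- and $g$-exponents reduce \eqref{recur1} and \eqref{recur1b} to a finite list of prefactor identities checked case by case, your only (cosmetic) difference being that you index the cases directly by $(3b+a-c)\bmod 6$ instead of by the parity of $b$ together with $(a-c)\bmod 6$ as the paper does. One negligible imprecision: for $\psi$ the six numerical identities form the same multiset but occur permuted among the residue classes (e.g.\ $r=0$ yields $28=25+3$ rather than $14=9+5$), which does not affect the argument.
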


\begin{proof}
We verify below (\ref{recur1}). Equation (\ref{recur1b}) is checked in the same fashion.

Using the definitions of the functions $f$ and $g$, one readily verifies the following facts:

 (i) If $a-b+c=2k$, then
\begin{equation}\label{fact1-1}
\begin{split}
f(a,b,c)+f(a-3,b-3,c-2)=f(a-2,b-1,c)+f(a-1,b-2,c-2)+1\\=f(a-1,b-1,c-1)+f(a-2,b-2,c-1)+1=2k^{2}-2k+1
\end{split}
\end{equation}

 (ii) If $a-b+c=2k+1$, then
\begin{equation}\label{fact1-2}
\begin{split}
f(a,b,c)+f(a-3,b-3,c-2)=f(a-2,b-1,c)+f(a-1,b-2,c-2)\\=f(a-1,b-1,c-1)+f(a-2,b-2,c-1)=2k^{2}
\end{split}
\end{equation}

 (iii) If $a-c=3l$ or $3l+1$, then
\begin{equation}\label{fact1-3}
\begin{split}
g(a,b,c)+g(a-3,b-3,c-2)=g(a-2,b-1,c)+g(a-1,b-2,c-2)\\=g(a-1,b-1,c-1)+g(a-2,b-2,c-1)
\end{split}
\end{equation}

 (iv) If $a-c=3l+2$, then
\begin{equation}\label{fact1-4}
\begin{split}
g(a,b,c)+g(a-3,b-3,c-2)+1=g(a-2,b-1,c)+g(a-1,b-2,c-2)\\=g(a-1,b-1,c-1)+g(a-2,b-2,c-1)+1
\end{split}
\end{equation}

 We first prove (\ref{recur1}) for the case of even $b$. We distinguish 6 sub-cases, corresponding to the values of $a-c\pmod{6}$.

If $a-c\equiv 0\pmod{6}$, then facts (i) and (iii) above allow us cancel out almost all of the factors 13 and 14 on the two sides of the equation (\ref{recur1}). After these cancelations, verifying  (\ref{recur1}) boils down to checking the simple equation
\begin{equation}\begin{split}
14h(a,b,c)h(a-3,b-3,c-2)=h(a-2,b-1,c)h(a-1,b-2,c-2)\\+h(a-1,b-1,c-1)h(a-2,b-2,c-1).
\end{split}
\end{equation}
However, this readily follows from the definition of the function $h(a,b,c)$, since in this case we have $h(a,b,c)=1$, $h(a-3,b-3,c-2)=1$, $h(a-2,b-1,c)=3$, $h(a-1,b-2,c-2)=3$,  $h(a-1,b-1,c-1)=1$ and $h(a-2,b-2,c-1)=5$.


If $a-c\equiv 1\pmod{6}$, then we argue similarly to the previous case, except we now use the facts (ii) and (iii). The (\ref{recur1}) reduces to
\begin{equation}\begin{split}
h(a,b,c)h(a-3,b-3,c-2)=h(a-2,b-1,c)h(a-1,b-2,c-2)\\+h(a-1,b-1,c-1)h(a-2,b-2,c-1).
\end{split}
\end{equation}
This follows by the definition of $h(a,b,c)$, as in this case we have $h(a,b,c)=3$, $h(a-3,b-3,c-2)=1$, $h(a-2,b-1,c)=1$, $h(a-1,b-2,c-2)=1$,  $h(a-1,b-1,c-1)=2$ and $h(a-2,b-2,c-1)=1$.

If $a-c\equiv 2\pmod{6}$, then facts (i) and (iv) allow us to simplify (\ref{recur1}) to
\begin{equation}\begin{split}
14h(a,b,c)h(a-3,b-3,c-2)=13h(a-2,b-1,c)h(a-1,b-2,c-2)\\+h(a-1,b-1,c-1)h(a-2,b-2,c-1).
\end{split}
\end{equation}
This follows since now we have $h(a,b,c)=1$, $h(a-3,b-3,c-2)=2$, $h(a-2,b-1,c)=1$, $h(a-1,b-2,c-2)=1$, $h(a-1,b-1,c-1)=5$ and $h(a-2,b-2,c-1)=3$.

 If $a-c\equiv 3\pmod{6}$, then we apply facts (ii) and (iii) to reduce (\ref{recur1}) to
\begin{equation}\begin{split}
h(a,b,c)h(a-3,b-3,c-2)=h(a-2,b-1,c)h(a-1,b-2,c-2)\\+h(a-1,b-1,c-1)h(a-2,b-2,c-1).
\end{split}
\end{equation}
This equation holds since $h(a,b,c)=1$, $h(a-3,b-3,c-2)=5$, $h(a-2,b-1,c)=2$, $h(a-1,b-2,c-2)=2$, $h(a-1,b-1,c-1)=1$ and $h(a-2,b-2,c-1)=1$.

 If $a-c\equiv 4\pmod{6}$, then facts (i) and (iii) allow us to simplify (\ref{recur1}) to
\begin{equation}\begin{split}
14h(a,b,c)h(a-3,b-3,c-2)=h(a-2,b-1,c)h(a-1,b-2,c-2)\\+h(a-1,b-1,c-1)h(a-2,b-2,c-1),
\end{split}
\end{equation}
that is true by the definition of the function $h(a,b,c)$ (in this case $h(a,b,c)=2$, $h(a-3,b-3,c-2)=1$, $h(a-2,b-1,c)=5$, $h(a-1,b-2,c-2)=5$, $h(a-1,b-1,c-1)=3$ and $h(a-2,b-2,c-1)=1$).

Finally, if $a-c\equiv 5\pmod{6}$, then, by facts (ii) and (iv), (\ref{recur1}) reduces to
\begin{equation}\begin{split}
h(a,b,c)h(a-3,b-3,c-2)=13h(a-2,b-1,c)h(a-1,b-2,c-2)\\+h(a-1,b-1,c-1)h(a-2,b-2,c-1).
\end{split}
\end{equation}
As in the previous cases, this follows by the definition of the function $h(a,b,c)$, since in this case we have $h(a,b,c)=5$, $h(a-3,b-3,c-2)=3$, $h(a-2,b-1,c)=1$, $h(a-1,b-2,c-2)=1$,  $h(a-1,b-1,c-1)=1$ and $h(a-2,b-2,c-1)=2$.

The remaining case of odd $b$ turns out to follow from the above calculations. Indeed,  for $i=1,2,\dotsc,6$, verification of the case of $b$ odd and $a-c \equiv i\pmod{6}$ turns out to involve precisely the same calculation as the verification of the case of even $b$ and $a-c \equiv 3+i\pmod{6}$ (this is so because both the value of $h(a,b,c)$ and the identities in facts (i)--(iv) above remain unchanged when we simultaneously change the parity of $b$ and increase by 3 the residue of $a-c$ modulo 6).
\end{proof}

\begin{lemma}\label{form3} Assume that $a$, $b$ and $c$ are integers, and let $d=2b-a-2c$ and  $e=b+d-a$.

$(${\rm a}$)$. We have
\begin{equation}\label{recur3}
\begin{split}
\phi(a,b,c)\phi(a-2,b-2,c)=\phi(a-1,b-1,c)^{2}\\+ \phi(a,b,c+1)\phi(a-2,b-2,c-1)
\end{split}
\end{equation}
and
\begin{equation}\label{recur3b}
\begin{split}
\psi(a,b,c)\psi(a-2,b-2,c)=\psi(a-1,b-1,c)^{2}\\+\psi(a,b,c+1)\psi(a-2,b-2,c-1).
\end{split}
\end{equation}

$(${\rm b}$)$. If we assume in addition that $c=0$, then
\begin{equation}\label{recur3c}
\begin{split}
\phi(a,b,c)\phi(a-2,b-2,c)=\phi(a-1,b-1,c)^{2}\\+\phi(a,b,c+1)\phi(e,d,1)
\end{split}
\end{equation}
and
\begin{equation}\label{recur3d}
\begin{split}
\psi(a,b,c)\psi(a-2,b-2,c)=\psi(a-1,b-1,c)^{2}\\+\psi(a,b,c+1)\psi(e,d,1).
\end{split}
\end{equation}
\end{lemma}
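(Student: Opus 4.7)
The plan is to verify Lemma~\ref{form3} following the same template used in the proof of Lemma~\ref{form1}. I will focus on the identity \eqref{recur3} for $\phi$; the identity \eqref{recur3b} for $\psi$ is verified in a parallel fashion, using that $p$ and $h$ are related by the residue-reversing symmetry $p(a,b,c)=h(s)$ with $s\equiv -(3b+a-c)\pmod 6$, so that replacing $h$ by $p$ simply permutes the six residue classes involved in the case analysis.

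First I will establish auxiliary identities, analogous to facts (i)--(iv) in the proof of Lemma~\ref{form1}, describing how $f$ and $g$ behave under the substitutions appearing in \eqref{recur3}. A direct calculation shows that $f(a,b,c)+f(a-2,b-2,c)=2f(a-1,b-1,c)$ always, while $f(a,b,c+1)+f(a-2,b-2,c-1)$ exceeds this common value by $1$ precisely when $a-b+c$ is odd. Similarly, $2g(a-1,b-1,c)=g(a,b,c+1)+g(a-2,b-2,c-1)$, and $g(a,b,c)+g(a-2,b-2,c)$ exceeds this common value by $1$ precisely when $a-c\not\equiv 1\pmod 3$. Crucially, $s:=3b+a-c\pmod 6$ simultaneously determines the parity of $a-b+c$ and the residue of $a-c\pmod 3$, so after cancelling the common powers of $13$ and $14$, the identity \eqref{recur3} reduces to
\[
13^{\delta}\,h(a,b,c)\,h(a-2,b-2,c)=h(a-1,b-1,c)^{2}+14^{\varepsilon}\,h(a,b,c+1)\,h(a-2,b-2,c-1),
\]
where $(\delta,\varepsilon)\in\{0,1\}^{2}$ depends only on $s\pmod 6$. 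Since the six arguments of $h$ take the residues $s,\ s-2,\ s+2,\ s-1,\ s-1\pmod 6$, the verification reduces to a finite table check over $s=0,1,\dots,5$; for instance, at $s=0$ the identity becomes $13\cdot 1\cdot 2=1^{2}+5^{2}$, at $s=1$ it becomes $3\cdot 5=1^{2}+14\cdot 1^{2}$, and the remaining four cases are handled identically.

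For part (b), I will observe that the derivation above used only the algebraic definitions \eqref{fipsistart}--\eqref{fipsiend}, and in particular no positivity assumption on $c$, so \eqref{recur3} is a polynomial-type identity valid for every triple of integers; specializing to $c=0$ gives
\[
\phi(a,b,0)\phi(a-2,b-2,0)=\phi(a-1,b-1,0)^{2}+\phi(a,b,1)\phi(a-2,b-2,-1).
\]
The content of part (b) is therefore the identification $\phi(e,d,1)=\phi(a-2,b-2,-1)$, where $d=2b-a$ and $e=3b-2a$, and analogously for $\psi$. I will check each of the three factors separately: $f(e,d,1)=f(a-2,b-2,-1)$ is immediate from $(e-d+1)^{2}=(b-a+1)^{2}=(a-b-1)^{2}$, and $h(e,d,1)=h(a-2,b-2,-1)$ follows from $3d+e-1=9b-5a-1\equiv 3b+a-1\equiv 3(b-2)+(a-2)-(-1)\pmod 6$ (and the corresponding check for $p$ handles $\psi$).

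The main obstacle is the remaining equality $g(e,d,1)=g(a-2,b-2,-1)$. Unravelling the definition, the difference $g(e,d,1)-g(a-2,b-2,-1)$ equals
\[
-(b-a)(3b-a-2)+\left\lfloor\frac{(3b-2a-1)^{2}}{3}\right\rfloor-\left\lfloor\frac{(a-1)^{2}}{3}\right\rfloor.
\]
The algebraic identity $(3b-2a-1)^{2}-(a-1)^{2}=3(b-a)(3b-a-2)$, combined with the congruence $3b-2a-1\equiv a-1\pmod 3$ (which forces the two squares to agree modulo $3$, so the difference of their floors equals exactly one third of the difference of the numerators), shows that the floor contribution is precisely $(b-a)(3b-a-2)$, cancelling the first term. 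With this equality in hand, part (b) follows at once from part (a), and the proofs of \eqref{recur3b} and \eqref{recur3d} for $\psi$ are obtained by a verbatim repetition of the argument with $h$ replaced by $p$.
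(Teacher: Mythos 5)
Your proposal is correct and follows essentially the same route as the paper: the same additivity facts for $f$ and $g$ under the five substitutions, reduction (after cancelling common powers of $13$ and $14$) to a finite check of $h$- resp. $p$-values by residue class, and for part (b) the key identity $\phi(a-2,b-2,-1)=\phi(e,d,1)$ (checked factor by factor) combined with the $c=0$ specialization of part (a). The only differences are cosmetic: you index the case analysis directly by $3b+a-c\pmod 6$ rather than by $a-c\pmod 6$ together with the parity of $b$, and you verify the $g$-agreement in (b) by an explicit difference computation instead of quoting the common closed form; I checked that your two sample cases and the remaining residues all work out.
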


\begin{proof}
We prove below (\ref{recur3}) and (\ref{recur3c}); (\ref{recur3b}) and (\ref{recur3d}) follow in the same fashion.

$(${\rm a}$)$. Similarly to the proof of Theorem \ref{form1}, we have the following facts.

(i) If $a-b+c=2k$, then
\begin{equation}\begin{split}
f(a,b,c)+f(a-2,b-2,c)=2f(a-1,b-1,c)\\=f(a,b,c+1)+f(a-2,b-2,c-1)=2k^2
\end{split}
\end{equation}

(ii) If $a-b+c=2k+1$, then
\begin{equation}\begin{split}
f(a,b,c)+f(a-2,b-2,c)+1=2f(a-1,b-1,c)+1\\=f(a,b,c+1)+f(a-2,b-2,c-1)=2k^2+2k+1
\end{split}
\end{equation}

(iii) If $a-c=3l$ or $3l+2$,  then
\begin{equation}\begin{split}
g(a,b,c)+g(a-2,b-2,c)=2g(a-1,b-1,c)+1\\=g(a,b,c+1)+g(a-2,b-2,c-1)+1
\end{split}
\end{equation}

(iv) If $a-c=3l+1$, then
\begin{equation}\begin{split}
g(a,b,c)+g(a-2,b-2,c)=2g(a-1,b-1,c)\\=g(a,b,c+1)+g(a-2,b-2,c-1)
\end{split}
\end{equation}

By the argument at the end of the proof of Theorem \ref{form1}, it suffices to consider the case of even $b$. We again have $6$ subcases, depending on the value of $a-c\pmod{6}$.

If $a-c\equiv 0\pmod{6}$, then facts (i) and (iii) allow us to cancel out the common exponents of 13 and 14 on both sides of (\ref{recur3}). After these simplifications, (\ref{recur3}) becomes
\begin{equation}\begin{split}
13h(a,b,c)h(a-2,b-2,c)=h(a-1,b-1,c)^2\\+h(a,b,c+1)h(a-2,b-2,c-1).
\end{split}
\end{equation}
By the definition of the function $h(a,b,c)$, this is equivalent to the obvious equality
\begin{equation}\label{numerical}
13\cdot 1\cdot 2=1\cdot 1+5\cdot 5.
\end{equation}
For the remaining equivalence classes we only present the equations that (\ref{recur3}) simplifies to; they all amount to numerical identities analogous to the one above.

If $a-c\equiv 1\pmod{6}$, simplifying (\ref{recur3}) using facts (ii) and (iv), we get
\begin{equation}\begin{split}
h(a,b,c)h(a-2,b-2,c)=h(a-1,b-1,c)^2\\+14h(a,b,c+1)h(a-2,b-2,c-1)\\
\end{split}
\end{equation}

If $a-c\equiv 2\pmod{6}$, then (\ref{recur3}) is simplified by facts (i) and (iii) to
\begin{equation}\begin{split}
13h(a,b,c)h(a-2,b-2,c)=h(a-1,b-1,c)^2\\+h(a,b,c+1)h(a-2,b-2,c-1)
\end{split}
\end{equation}

If $a-c\equiv 3\pmod{6}$, then facts (ii) and (iii) transform (\ref{recur3})  to
\begin{equation}\begin{split}
13h(a,b,c)h(a-2,b-2,c)=h(a-1,b-1,c)^2\\+14h(a,b,c+1)h(a-2,b-2,c-1)
\end{split}
\end{equation}

If $a-c\equiv 4\pmod{6}$, then using facts (i) and (iv) (\ref{recur3}) becomes
\begin{equation}\begin{split}
h(a,b,c)h(a-2,b-2,c)=h(a-1,b-1,c)^2\\+h(a,b,c+1)h(a-2,b-2,c-1)
\end{split}
\end{equation}

If $a-c\equiv 5\pmod{6}$, then by facts (ii) and (iii)  (\ref{recur3}) is equivalent to
\begin{equation}\begin{split}
13h(a,b,c)h(a-2,b-2,c)=h(a-1,b-1,c)^2\\+14h(a,b,c+1)h(a-2,b-2,c-1)
\end{split}
\end{equation}
As stated above, all these equations are equivalent to numerical identities analogous to (\ref{numerical}).

$(${\rm b}$)$. For $c=0$, we have $d=2b-a$ and $e=3b-2a$. We claim that the following equation holds:
\begin{equation}\label{equal3}
\phi(a-2,b-2,-1)=\phi(3b-2a,2b-a,1).
\end{equation}

Indeed, using the definition of the functions $f$ and $g$, one can readily check that
\begin{equation}\begin{split}
f(a-2,b-2,-1)=f(3b-2a,2b-a,1)\\=\left\lfloor\frac{(a-b-1)^2}{4}\right\rfloor
\end{split}
\end{equation}
and
\begin{equation}
\begin{split}
g(a-2,b-2,-1)=g(3b-2a,2b-a,1)\\=b^2-ab+a-b+\left\lfloor\frac{(a-1)^2}{3}\right\rfloor
\end{split}
\end{equation}
Since $3(b-2)+(a-2)-(-1) \equiv 3(2b-a)+(3b-2a)-1\pmod{6}$, we get
$h(a-2,b-2,-1)=h(3b-2a,2b-a,1)$. Thus, $\phi(a-2,b-2,-1)=\phi(3b-2a,2b-a,1)$.

Now (\ref{recur3c}) follows by (\ref{equal3}) and the $c=0$ specialization of (\ref{recur3}).
\end{proof}

\begin{lemma}\label{form4} Let $a$, $b$ and $c$ be integers, and set $d=2a-b-2c$.

$(${\rm a}$)$. We have
\begin{equation}\label{recur4}
\begin{split}
\phi(a,b,c)\phi(a-2,b-3,c-2)=\phi(a-1,b-1,c)\phi(a-1,b-2,c-2)\\+\phi(a-2,b-2,c-1)\phi(a,b-1,c-1)
\end{split}
\end{equation}
and
\begin{equation}\label{recur4b}
\begin{split}
\psi(a,b,c)\psi(a-2,b-3,c-2)=\psi(a-1,b-1,c)\psi(a-1,b-2,c-2)\\+\psi(a-2,b-2,c-1)\psi(a,b-1,c-1)
\end{split}
\end{equation}

$(${\rm b}$)$. If $d=0$, then
\begin{equation}\label{recur4c}
\begin{split}
\phi(a,b,c)\phi(a-2,b-3,c-2)=\psi(c,b-1,a-1)\phi(a-1,b-2,c-2)\\+\phi(a-2,b-2,c-1)\phi(a,b-1,c-1)
\end{split}
\end{equation}
and
\begin{equation}\label{recur4d}
\begin{split}
\psi(a,b,c)\psi(a-2,b-3,c-2)=\phi(c,b-1,a-1)\psi(a-1,b-2,c-2)\\+\psi(a-2,b-2,c-1)\psi(a,b-1,c-1)
\end{split}
\end{equation}
\end{lemma}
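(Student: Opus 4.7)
The plan is to mirror the case-by-case strategy used in the proofs of Lemmas~\ref{form1} and~\ref{form3}. For part~(a), I would first establish how the exponents of~$14$ (through~$f$) and of~$13$ (through~$g$) on the two sides of (\ref{recur4}) compare, cancel the common powers, and reduce the equation to a numerical identity involving only the function~$h$ (or~$p$, for (\ref{recur4b})). For part~(b), I would reduce the modified recurrences to the corresponding equations of part~(a) by means of an auxiliary symmetry identity that swaps $\phi$ with~$\psi$.

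More concretely, for part~(a), setting $m:=a-b+c$ a direct calculation shows that each of the three sums $f(a,b,c)+f(a-2,b-3,c-2)$, $f(a-1,b-1,c)+f(a-1,b-2,c-2)$, and $f(a-2,b-2,c-1)+f(a,b-1,c-1)$ equals $\lfloor m^2/4\rfloor+\lfloor(m-1)^2/4\rfloor$, so all factors of~$14$ cancel. Setting $n:=a-c$, a similar expansion of $g$ gives that the $g$-sum on the left exceeds each of the two $g$-sums on the right by exactly $1+2\lfloor n^2/3\rfloor-\lfloor(n-1)^2/3\rfloor-\lfloor(n+1)^2/3\rfloor$, which equals~$1$ if $n\equiv 0\pmod 3$ and~$0$ otherwise. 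Hence (\ref{recur4}) reduces to an identity in~$h$, carrying an extra factor of~$13$ on the left exactly when $a-c\equiv 0\pmod 3$. I would then run through the six cases indexed by $r:=3b+a-c\pmod 6$; since the arguments of~$h$ at the six triples that appear shift from~$r$ by $0,3,2,1,5,4$ modulo~$6$, respectively, plugging in the values of~$h$ reduces each case to a trivial numerical check such as $13=1\cdot 3+5\cdot 2$ (for $r\equiv 0$). Equation (\ref{recur4b}) is handled in exactly the same way, with $h$ replaced by~$p$.

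For part~(b), the crux is the auxiliary identity $\phi(a-1,b-1,c)=\psi(c,b-1,a-1)$, which in fact holds for all integers $a,b,c$: the $f$- and $g$-values coincide because $(c-(b-1)+(a-1))^2=(a-b+c)^2$ and $(c-(a-1))^2=(a-1-c)^2$, while a check over the six residue classes of $r\pmod 6$ shows that $p(c,b-1,a-1)=h(a-1,b-1,c)$ throughout (the two relevant arguments modulo~$6$ are $-r-2$ and $r+2$, and in each residue class the corresponding values of $p$ and~$h$ agree). The mirror identity $\psi(a-1,b-1,c)=\phi(c,b-1,a-1)$ is verified by the same six-case check. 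Substituting these identities into (\ref{recur4}) and (\ref{recur4b}) turns them immediately into (\ref{recur4c}) and (\ref{recur4d}). The main obstacle throughout is bookkeeping: the six-way case analysis on $r\pmod 6$ is tedious but mechanical, and carrying out the parallel analysis for~$p$ (both in part~(a) for (\ref{recur4b}) and in the swap identities for part~(b)) roughly doubles the volume of routine calculation.
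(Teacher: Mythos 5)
Your proposal is correct and follows essentially the same route as the paper's proof: part (a) is reduced, after verifying that the $f$-sums agree and that the $g$-sums differ by $1$ exactly when $a-c\equiv 0\pmod 3$, to a residue-class check of a numerical identity in $h$ (resp.\ $p$), and part (b) follows from the swap identity $\phi(a-1,b-1,c)=\psi(c,b-1,a-1)$ (and its mirror), which the paper likewise establishes via the observation that $h$ and $p$ agree when their arguments sum to $0\pmod 6$. Your indexing by $3b+a-c\pmod 6$ instead of the paper's (parity of $b$, $a-c\bmod 6$) bookkeeping is only a cosmetic difference, and your sample check $13=1\cdot 3+5\cdot 2$ is accurate.
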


\begin{proof}
We prove below (\ref{recur4}) and (\ref{recur4c}); the other two equalities follow analogously.

 $(${\rm a}$)$. This part can be proved similarly to Lemmas \ref{form1} and \ref{form3}(a). The following facts are readily verified.

(i) For any $a$, $b$ and $c$, we have
\begin{equation}\begin{split}
f(a,b,c)+f(a-2,b-3,c-2)=f(a-1,b-1,c)+f(a-1,b-2,c-2)\\=f(a-2,b-2,c-1)+f(a,b-1,c-1)
\end{split}
\end{equation}

(ii) If $a-c=3l$,  then
\begin{equation}\begin{split}
g(a,b,c)+g(a-2,b-3,c-2)=g(a-1,b-1,c)+g(a-1,b-2,c-2)+1\\=g(a-2,b-2,c-1)+g(a,b-1,c-1)+1
\end{split}
\end{equation}

(iii) If $a-c=3l+1$ or $3l+2$ , then
\begin{equation}\begin{split}
g(a,b,c)+g(a-2,b-3,c-2)=g(a-1,b-1,c)+g(a-1,b-2,c-2)\\=g(a-2,b-2,c-1)+g(a,b-1,c-1)
\end{split}
\end{equation}

As explained in the proofs of the previous two lemmas, it suffices to consider for the case of even $b$. The above three identities (i)--(iii) allow us to simplify (\ref{recur4}) to a simple equation involving only evaluations of the function $h$. The latter is readily checked by considering separately the possible residues of $a-c$ modulo 6.

$(${\rm b}$)$. If $d=0$, then $2b-a=2c$. We will check the following equality between $\phi$ and $\psi$
\begin{equation}\label{equal4}
\phi(a-1,b-1,c)=\psi(c,b-1,a-1).
\end{equation}

Using the definition of the functions $f$ and $g$, one obtains that
\begin{equation}
f(a-1,b-1,c)=f(c,b-1,a-1)=\left\lfloor\frac{(c+a-b)^2}{4}\right\rfloor,
\end{equation}
and
\begin{equation}
\begin{split}
g(a-1,b-1,c)=g(c,b-1,a-1)\\=(b-a)(b-c-1)+\left\lfloor\frac{(a-c-1)^2}{3}\right\rfloor.
\end{split}
\end{equation}
Moreover, note that the definition of the functions $h$ and $p$ implies that if $(3b+a-c) +(3b'+a'-c')\equiv 0\pmod{6}$, then $h(a,b,c)=p(a',b',c')$. Therefore,
$h(a-1,b-1,c)=p(c,b-1,a-1)$. This implies that
$\phi(a-1,b-1,c)=\psi(c,b-1,a-1)$.

By (\ref{equal4}) and the  $d=0$ specialization of (\ref{recur4}) we obtain (\ref{recur4c}).
\end{proof}

\section{Proof of Theorem \ref{main}}


We prove Theorem \ref{main} by induction on the perimeter of the contour $\mathcal{C}(a,b,c)$ we used in Section 3 to define the regions $D_{a,b,c}$ and $E_{a,b,c}$. Denote this perimeter by $\mathcal{P}(a,b,c)$. If $a> c+d$, then $\mathcal{P}(a,b,c)=a+b+c+d+e+f= a+b+c+(2b-a-2c)+(b+d-a)+(a-c-d)=4b-2c$. If on the other hand we have $a\leq c+d$, then $\mathcal{P}(a,b,c)=a+b+c+(2b-a-2c)+(b+d-a)+(c+d-a)=4b-2a+2d=4b-2a+4b-2a-4c=8b-4a-4c$. In particular, $\mathcal{P}(a,b,c)$ is always even.

The base cases for our induction are those triples $(a,b,c)$ of indices of regions $D_{a,b,c}$ and $E_{a,b,c}$ for which {\it at least one} of the following conditions hold:

\medskip
$(i)$ $\mathcal{P}(a,b,c)\leq14$

\medskip
$(ii)$ $b\leq4$

\medskip
$(iii)$ $c+d\leq2$

\medskip
By the triangle inequality we have $b+c+ d+e+f\geq a$, and thus $2a\leq \mathcal{P}(a,b,c)$. Similarly we have $2b,2c\leq \mathcal{P}(a,b,c)$. Therefore, if $\mathcal{P}(a,b,c)\leq 14$, then $a,b,c\leq 7$. Taking into account also the inequalities $b\geq2$, $2b-a-2c\geq0$ and $3b-2a-2c\geq0$ which the indices of the regions $D_{a,b,c}$ and $E_{a,b,c}$ satisfy (see Section 3), it is straightforward to see that there are only 20 contours $\mathcal{C}(a,b,c)$ of perimeter at most 14 which give rise to non-empty regions of $D$- and $E$-type, i.e., which also satisfy $b\geq2$, $2b-a-2c\geq0$ and $3b-2a-2c\geq0$ (indeed, this is immediate for instance by running three `for' loops in a computer algebra package such as Maple; 8 of these fall into the case $a>c+d$, and 12 into the case $a\leq c+d$).

For each of these 20 contours the statement of Theorem \ref{main} can be readily checked using for instance the computer package \texttt{vaxmacs} written by David Wilson\footnote{This software is available at  \texttt{http://dbwilson.com/vaxmacs/}}. The number of tilings of each of these regions is returned in less than one second, and one easily checks that it agrees with the number given by the formulas $\phi(a,b,c)$ and $\psi(a,b,c)$ in the statement of  Theorem \ref{main}.

The base cases corresponding to inequalities $(ii)$ and $(iii)$ above can be handled by the same way. Indeed, if $b\leq 4$, then, since $d=2b-a-2c\geq 0 $, we get $c\leq b\leq 4$. By the same token, we also have $a\leq 2b\leq 8$. There are now 43 triples $(a,b,c)$ with components in the above ranges for which the $D$- and $E$-type regions of Section 3 are defined. Just as for the base case $(i)$,
with the help of \texttt{vaxmacs} one readily checks that $\M(D_{a,b,c})=\phi(a,b,c)$ and $\M(E_{a,b,c})=\psi(a,b,c)$ for all triples in this base case as well.

Similarly, if $c+d\leq 2$, then $d=2b-a-2c\leq 2$, so $2b\leq a+2c+2\leq a+6\leq b+d+6\leq b+8$. Thus $b\leq 8$, so $a\leq b+d\leq 10$. As far as $c$ is concerned, the defining condition of base case $(iii)$ implies $c\leq2$. From among the triples of integers $(a,b,c)$ with $a$, $b$ and $c$ in the above ranges, and with $c+d=2b-a-c\leq 2$, there are only 10 for which the regions $D_{a,b,c}$ and $E_{a,b,c}$ are defined. Using again  \texttt{vaxmacs}, one checks that for each of them the equalities $\M(D_{a,b,c})=\phi(a,b,c)$ and $\M(E_{a,b,c})=\psi(a,b,c)$ are satisfied.


For the induction step, assume that the statement of Theorem \ref{main} is true for all regions $D_{a,b,c}$ and $E_{a,b,c}$ with perimeter less than $k$ (for some fixed $k\geq 16$). We need to show that the statement is true for all regions with perimeter $k$. Let $D_{a,b,c}$ and $E_{a,b,c}$ be two regions having perimeter $\mathcal{P}(a,b,c)=k$.



By the base cases corresponding to inequalities $(ii)$ and $(iii)$ above, {\it we may assume throughout the rest of the proof that for all the $D$- and $E$-type regions that occur, their $b$-parameter is at least $5$, and the sum of their $c$- and $d$-parameters is at least $3$.}



\textbf{Case I. $a\leq c+d$.}

 We claim that in this case we may assume $e\geq2$. Indeed, as $a\leq c+d$, we have $a\leq2b-a-c$. Since $d\geq 0$, we also have $2b-a-2c\geq 0$. By adding these two inequalities we obtain $3a+3c\leq 4b$. Then we have $e=b+d-a=3b-2a-2c=3b-\frac{2}{3}(3a+3c)\geq b/3\geq 5/3$. Since $e$ is an integer, this implies that $e\geq 2$.

\textit{Case I.1. $a \geq 2$.}

Note that since $c+d\geq 3$, at least one of $c$ and $d$ is greater than or equal to 2. We divide Case I.1 into the following four subcases.

\quad\emph{Case I.1.a. } $d\geq2$ and $c\geq1$.

It readily follows (most easily by using the formula for the perimeter that we gave at the beginning of this section) that the four numbers  $\mathcal{P}(a-2,b-2,c)$, $\mathcal{P}(a-1,b-1,c)$, $\mathcal{P}(a,b,c+1)$ and $\mathcal{P}(a-2,b-2,c-1)$ are less than $\mathcal{P}(a,b,c)$ by 8, 4, 4, and 4 units, respectively. Thus, by the induction hypothesis we have
\[\M(D_{a-2,b-2,c})=\phi(a-2,b-2,c),\  \M(D_{a-1,b-1,c})=\phi(a-1,b-1,c),\]
\[ \M(D_{a,b,c+1})=\phi(a,b,c+1),\ \M(D_{a-2,b-2,c-1})=\phi(a-2,b-2,c-1),\]
\[\M(E_{a-2,b-2,c})=\psi(a-2,b-2,c),\  \M(E_{a-1,b-1,c})=\psi(a-1,b-1,c),\]
\[ \M(E_{a,b,c+1})=\psi(a,b,c+1),\ \M(E_{a-2,b-2,c-1})=\psi(a-2,b-2,c-1).\]
From the recurrences  (\ref{eq3-3}) and (\ref{eq4-3}) in Lemma \ref{con3}(a) (which applies, since as pointed out above the $b$-parameter can always be assumed to be at least 5, and since $e\geq2$ in this case, as shown above), and (\ref{recur3}) and (\ref{recur3b}) in Lemma \ref{form3}(a), we get $\M(D_{a,b,c})=\phi(a,b,c)$ and $\M(E_{a,b,c})=\psi(a,b,c)$.

\quad\emph{Case I.1.b.} $d\geq2$ and $c=0$.

This case can be treated similarly to the previous case by using (\ref{eq3-4}), (\ref{eq4-4}) in Lemma \ref{con3}(b) (which has the same hypotheses as  Lemma \ref{con3}(a), verified in the previous subcase), and  (\ref{recur3c}) and (\ref{recur3d}) in Lemma \ref{form3}(b). The fact that the perimeter of the new regions is smaller than $\mathcal{P}(a,b,c)$  follows by the calculations in Case I.1.a for all the new regions except $D_{e,d,1}$ and $E_{e,d,1}$. The latter have sides of lengths $e,d,1,b-2,a-2,f+1$ (see Figure \ref{conden9}), so they have perimeter $\mathcal{P}(a,b,c)-2<\mathcal{P}(a,b,c)$.

\quad\emph{Case I.1.c. } $c\geq2$ and $d\geq1$.

This can be treated similarly to the previous subcases by using  (\ref{eq3-1}), (\ref{eq4-1}) in Lemma \ref{con4}(a) (which applies, as $a\geq2$, $b\geq5$ and $c\geq2$), and (\ref{recur4}) and (\ref{recur4b}) in Lemma \ref{form4}(a). One readily verifies that the perimeters of all the new regions in the recurrences are smaller than the perimeter $\mathcal{P}(a,b,c)$ of $D_{a,b,c}$ and $E_{a,b,c}$.

\quad\emph{Case I.1.d.} $c\geq2$ and $d=0$.

This can be treated similarly to the previous subcase by using the equations  (\ref{eq3-2}), (\ref{eq4-2}) in Lemma \ref{con4}(b) (which has the same hypotheses as Lemma \ref{con4}(a), already verified in the previous subcase), and (\ref{recur4c}) and (\ref{recur4d})  in Lemma \ref{form4}(b). All the regions in the recurrences have smaller perimeter than $D_{a,b,c}$ and $E_{a,b,c}$. Indeed, except for the regions $D_{c,b-1,a-1}$ and $E_{c,b-1,a-1}$, this follows from the previous subcase. These two exceptional regions have sides $c,b-1,a-1,f,e-1,1$ (see Figure \ref{conden10}), so their perimeter is $\mathcal{P}(a,b,c)-2<\mathcal{P}(a,b,c)$.

\emph{Case I.2. } $a\leq 1 $.

Since $c+d\geq3$, we have in this case that $f=c+d-a\geq 2$. Reflecting the region $D_{a,b,c}$  (resp. $E_{a,b,c}$) about the side of length $b$ of the contour, we get the region $E_{f,e,d}$ (resp. $D_{f,e,d}$).

If $e\leq 4$, then, since the $b$-parameter of these regions is $e$, the statement of Theorem \ref{main} holds by the base case $(ii)$. We may therefore assume that $e\geq 5$. With this assumption the regions $D_{f,e,d}$ and $E_{f,e,d}$ satisfy the condition in Case I.1 above. Indeed, we have $f\geq 2$ (corresponding to the condition $a\geq 2$ in the original region). Furthermore, we have $f\leq d+c \Leftrightarrow 2b-2a-c\leq (2b-a-2c)+c \Leftrightarrow -a\leq 0$ (corresponding to the condition $a\leq c+d$ in the original region). Thus we obtain $\M(E_{f,e,d})=\psi(f,e,d)$ and $\M(D_{f,e,d})=\phi(f,e,d)$.

Moreover, similarly to the proof of Lemma \ref{form4}(b), one can verify that
\begin{equation}
\phi(a,b,c)=\psi(f,e,d)=\psi(2b-2a-c, 3b-2a-2c,2b-a-2c)
\end{equation}
and
\begin{equation}
\psi(a,b,c)=\phi(f,e,d)=\phi(2b-2a-c,3b-2a-2c,2b-a-2c)
\end{equation}
(recall that $d=2b-a-2c$, $e=3b-2a-2c$ and $f=c+d-a=2b-2a-c$).
Therefore,
\begin{equation}
\M(D_{a,b,c})=\M(E_{f,e,d})=\psi(f,e,d)=\phi(a,b,c)
\end{equation}
and
\begin{equation}
\M(E_{a,b,c})=\M(D_{f,e,d})=\phi(f,e,d)=\psi(a,b,c).
\end{equation}

\medskip
\textbf{Case II. $a\geq c+d+1$.}

\quad\emph{Case II.1. } $e\geq d$.

If $c\geq 2$, similarly to Case I.1.a we get $\M(D_{a,b,c})=\phi(a,b,c)$ and $\M(E_{a,b,c})=\psi(a,b,c)$ by the induction hypothesis and the recurrences (\ref{eq1-1}) and (\ref{eq2-1}) in Lemma \ref{con1}, and  (\ref{recur1}) and (\ref{recur1b}) in Lemma \ref{form1}.






If $c\leq1$, then $d\geq2$ (as $c+d\geq3$), so $e\geq 2$.

For $c=1$, this case follows by using the recurrences (\ref{eq3-3}) and (\ref{eq4-3}) in Lemma \ref{con3}(a) (which applies, as $a\geq c+d+1\geq 4$, $b\geq5$, and $e\geq d\geq2$),  and (\ref{recur3}) and (\ref{recur3b})  in Lemma \ref{form3}(a), in the same fashion as in Case I.1.a. For $c=0$, we use the recurrences (\ref{eq3-4}) and (\ref{eq4-4}) in Lemma \ref{con3}(b), and (\ref{recur3c}) and (\ref{recur3d}) in Lemma \ref{form3}(b), and we argue in a way similar to the one employed in Case I.1.b. 

\quad\emph{Case II.2. } $e\leq d$.

Note that, using the expressions of $e$ and $f$ in terms of $a$, $b$ and $c$, the condition defining this subcase is equivalent to $3b-2a-2c\leq2b-a-2c$, which in turn is the same as $b\leq a$.

We reflect the regions $D_{a,b,c}$ and $E_{a,b,c}$ about the horizontal line passing through the westernmost vertex of their contour. The resulting regions are $D_{b,a,f}$ and $E_{b,a,f}$, respectively; note that the $d$-, $e$- and $f$-parameters of these regions are $e$, $d$ and $c$, respectively.

One can readily verify from the definition that we have
\begin{equation}
\phi(a,b,c)=\phi(b,a,f), \text{and}\ \psi(a,b,c)=\psi(b,a,f).
\end{equation}

If $a\leq 4$ or $f+e\leq 2$, then the statement of Theorem \ref{main} holds by the base case $(iii)$ (this is because the $b$-parameter of these regions is $a$, while the $c$- and $d$-parameters are $f$ and $e$, respectively). We may therefore assume that $a\geq5$ and $f+e\geq3$.

If $c\geq 1$, then the resulting regions $D_{b,a,f}$ and $E_{b,a,f}$ satisfy the conditions of Case II.1. Indeed, $d\geq e$ (corresponding to the condition $e\geq d$ in the original regions), and $b\geq f+e+1\Leftrightarrow b\geq (-2b+2a+c)+(3b-2a-2c)+1\Leftrightarrow 0\geq 1-c$ (corresponding to the condition $a\geq c+d+1$ in the original regions). Therefore, we have
\begin{equation}\label{finalcase1}
\M(D(a,b,c))=\M(D(b,c,f))=\phi(b,c,f)=\phi(a,b,c)
\end{equation}
and
\begin{equation}\label{finalcase2}
\M(E(a,b,c))=\M(E(b,c,f))=\psi(b,c,f)=\psi(a,b,c).
\end{equation}

On the other hand, if $c=0$, then the regions $D_{b,a,f}$ and $E_{b,a,f}$ satisfy the conditions of  Case I above. Indeed, $b\leq f+e\Leftrightarrow b= -2b+2a+c+3b-2a-2c\Leftrightarrow 0\leq c$ (corresponding to the condition $a\leq c+d$ in the original regions). Thus we again have equations (\ref{finalcase1}) and (\ref{finalcase2}). This completes the proof of Theorem \ref{main}.

\section{Concluding remarks}

We have seen in this paper how Kuo's graphical condensation method can be used to prove the exact enumeration of tilings of two families of regions on the lattice obtained from the triangular lattice by drawing in the altitudes in all the unit triangles. We showed how this implies Blum's conjecture on the number of tilings of the hexagonal dungeons, which was open since 1999. This illustrates the power of this method on a lattice different from the square and hexagonal lattices, the two lattices involved in the vast majority of the applications of graphical condensation in the literature.

We conclude by pointing out that the method used in the original proof of the Aztec dungeon theorem (i.e., using a certain local replacement rule to transform the problem into a weighted Aztec diamond enumeration problem, see \cite{Ciucu}) can be considered for hexagonal dungeons as well, but despite initial promise it does not seem to lead to a solution. It does lead however to some new families of regions that are natural to consider from this point of view, and which have the number of their tilings given by products of two perfect powers. We are planning to present them in a subsequent paper.

\thispagestyle{headings}

\end{document}